\title{Discrete and metric divisorial gonality can be different}
\author{Josse van Dobben de Bruyn \and Harry Smit \and Marieke van der Wegen}
\date{6 July 2021}
\newcommand{\my@author}[1]{\par\bigskip}
\newcommand{\my@address}[1]{\par\noindent\textsc{#1}}
\newcommand{\my@email}[1]{\par\noindent \textit{E-mail address:} \texttt{\href{mailto:#1}{#1}}}
\def\myautoref{\@ifstar\@myautoref\@@myautoref}
\def\@@myautoref#1#2{\hyperref[#2]{\autoref*{#1}\ref*{#2}}}  
\def\@myautoref#1#2{\hyperref[#1]{\autoref*{#1}(#2)}}        
\newcommand{\mysecref}[1]{\hyperref[#1]{\S\ref*{#1}}} 
\def\mylinkcolour{blue!40!black}
\def\schaal{1.7}
\tikzset{vertex/.style={circle,fill,inner sep=1.5pt}}
\numberwithin{equation}{section}
\declaretheorem[style=definition,sibling=equation]{definition}
\declaretheorem[style=remark,sibling=definition]{remark}
\declaretheorem[style=plain,sibling=definition]{theorem}
\declaretheorem[style=plain,sibling=definition]{lemma}
\declaretheorem[style=plain,sibling=definition]{proposition}
\declaretheorem[style=plain,sibling=definition]{corollary}
\declaretheorem[style=plain,sibling=definition]{conjecture}
\newcommand{\N}{\mathbb{N}}
\newcommand{\Z}{\mathbb{Z}}
\newcommand{\Q}{\mathbb{Q}}
\newcommand{\R}{\mathbb{R}}
\DeclareMathOperator{\dgon}{dgon}
\DeclareMathOperator{\rank}{rank}
\DeclareMathOperator{\Div}{Div}
\DeclareMathOperator{\supp}{supp}
\DeclareMathOperator{\mdiv}{div} 
\DeclareMathOperator{\val}{val}
\newcommand{\pathcc}[2]{P_{[#1,#2]}} 
\newcommand{\pathco}[2]{P_{[#1,#2)}} 
\newcommand{\pathoc}[2]{P_{(#1,#2]}} 
\newcommand{\pathoo}[2]{P_{(#1,#2)}} 
\newcommand{\compl}[1]{#1^c}
\DeclareMathOperator{\@Tm}{m}
\DeclareMathOperator{\@Tms}{ms}
\newcommand{\Tm}{\ensuremath{T_{\@Tm}}}
\newcommand{\Tms}{\ensuremath{T_{\@Tms}}}
\DeclareMathOperator{\@dgonr}{dgon}
\newcommand{\dgonr}[2]{\@dgonr_{#2}(#1)}
\DeclareMathOperator{\@Sym}{Sym}
\newcommand{\Sym}[1]{\@Sym(#1)}
\DeclareSymbolFont{bbold}{U}{bbold}{m}{n}
\DeclareSymbolFontAlphabet{\mathbbold}{bbold}
\newcommand{\one}{\ensuremath{\mathbbold{1}}}
\newcommand{\vv}[2]{v_{#1}^{#2}}
\begin{document}
\maketitle

\begin{abstract}
	This paper compares the divisorial gonality of a finite graph $G$ to the divisorial gonality of the associated metric graph $\Gamma(G,\one)$ with unit lengths.
	We show that $\dgon(\Gamma(G,\one))$ is equal to the minimal divisorial gonality of all regular subdivisions of $G$,
	and we provide a class of graphs for which this number is strictly smaller than the divisorial gonality of $G$.
	This settles a conjecture of M.~Baker \cite[Conjecture 3.14]{Baker-specialization} in the negative.
\end{abstract}

\section{Introduction}
Over the past 15 years, fruitful analogies between graphs and algebraic curves have been established, building on the seminal paper of Baker and Norine \cite{BN-Riemann-Roch}.
In that paper, the authors proved a version of the Riemann--Roch theorem for divisors on a finite graph, and showed that their result is closely related to the combinatorial theory of chip-firing games (e.g.{} \cite{BLS, Biggs}).
The paper of Baker and Norine, together with another paper by Baker \cite{Baker-specialization}, have led to a flurry of research into the interplay between graphs and curves, leading to new problems and results in combinatorics and to new combinatorial techniques in geometry (e.g.{} a combinatorial proof of the Brill--Noether theorem, \cite{CDPR}).

In \cite{Baker-specialization}, Baker posed a number of open problems in the theory of divisors on graphs.
All but two of these have since been solved; see \cite{Hladky-Kral-Norine, Luo, CDPR, Draisma-Vargas}.
The first and most important remaining open problem is the Brill--Noether conjecture for finite graphs \cite[Conj.~3.9(1)]{Baker-specialization}, based on an analogous result for curves.
We focus on the $r = 1$ case of this conjecture.
Let $\dgonr{G}{r}$ denote the smallest degree of a rank $r$ divisor on $G$, and let $\dgon(G) := \dgonr{G}{1}$ denote the \emph{divisorial gonality} of $G$ (see \mysecref{sec:prelims} for definitions).
Then the $r = 1$ case of the Brill--Noether conjecture can be stated as follows.
\begin{conjecture}[{Gonality conjecture, \cite[Conjecture 3.10(1)]{Baker-specialization}}]
	\label{conj:Brill-Noether}
	Let $G$ be a connected loopless multigraph, and let $g := |E(G)| - |V(G)| + 1$ denote its cyclomatic number.
	Then $\dgon(G) \leq \lfloor\frac{g + 3}{2}\rfloor$.
\end{conjecture}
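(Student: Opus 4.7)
I would approach this conjecture by induction on the cyclomatic number $g$. The base cases $g \in \{0, 1\}$ are immediate: a tree has $\dgon = 1$, and a graph of cyclomatic number~$1$ (a single cycle with possibly pendant trees) has $\dgon = 2$ by placing one chip at each of two suitably chosen vertices on the cycle; both match the bound $\lfloor (g+3)/2 \rfloor$.

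For the inductive step, the key structural observation is that the conjectured bound increases by $1$ every time $g$ increases by $2$. The natural plan is therefore to peel off two independent cycles---for instance, two successive ears in an ear decomposition of a $2$-edge-connected block of $G$---to obtain a smaller graph $G'$ with $g(G') = g - 2$, take an inductively given rank-$1$ divisor $D'$ of degree $\lfloor (g+1)/2 \rfloor$ on $G'$, and lift it to a rank-$1$ divisor $D$ on $G$ by placing a single additional chip at a carefully chosen vertex. After fixing a spanning tree $T$ of $G$, I would try to take $D$ of the form $\sum_i n_i v_i$ with support concentrated near the endpoints of the $g$ non-tree edges and with $\sum n_i = \lfloor (g+3)/2 \rfloor$, contributing on average $1/2$ chip per non-tree edge, and verify $\rank(D) \geq 1$ by showing, via Dhar's burning algorithm from each vertex $v$, that the $v$-reduced form of $D - v$ is effective.

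The principal obstacle---and the reason the conjecture has resisted proof since Baker posed it---is that divisorial gonality is notoriously non-monotone under the natural graph operations: both $\dgon(G - e)$ and $\dgon(G / e)$ can be strictly larger or strictly smaller than $\dgon(G)$, so no greedy peeling of cycles will work uniformly. Any proof must make a carefully coordinated choice of both the lifted divisor and the inductive reduction. The results of this paper sharpen this difficulty: since $\dgon(\Gamma(G, \one))$ can be strictly smaller than $\dgon(G)$, one cannot simply transport upper bounds from the often more tractable metric setting back to the combinatorial one, which rules out the most obvious analogue of Baker's specialization argument. I expect this mismatch between the discrete and metric invariants to be the dominant technical difficulty in any complete proof of the conjecture.
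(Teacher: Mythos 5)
This statement is \autoref{conj:Brill-Noether}, which the paper does not prove: it is stated as a conjecture, explicitly described as ``still wide open'' (with a footnote recording that the one published proof attempt, by Caporaso, has an unrepaired gap), and the paper's own contribution is to \emph{refute} the most natural strategy for proving it, namely \autoref{conj:subdivision-metric}. So there is no ``paper proof'' to compare against, and your proposal must stand on its own as a proof of an open problem. It does not: it is an outline that stops precisely at the point where every known attempt fails. The base cases $g \in \{0,1\}$ are fine, but the inductive step is entirely absent. You do not specify how the two ears are chosen, where the one additional chip is placed, or---crucially---why the resulting divisor $D$ on $G$ has rank at least $1$. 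Writing that one would ``verify $\rank(D) \geq 1$ by showing, via Dhar's burning algorithm from each vertex $v$, that the $v$-reduced form of $D - v$ is effective'' is a restatement of the definition of positive rank, not an argument. Your own closing paragraph concedes that no greedy peeling of cycles works uniformly because gonality is non-monotone under edge deletion and contraction; that concession is accurate, and it is exactly the gap.

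There is also a structural problem with the reduction itself. The inductive hypothesis hands you a rank-$1$ divisor $D'$ on the smaller graph $G'$, but divisor equivalence on $G'$ is governed by the Laplacian of $G'$, not of $G$: reattaching the two ears changes the firing rules at their endpoints, so chip configurations reachable from $D'$ in $G'$ need not be reachable from the lifted divisor in $G$, and a divisor of positive rank on $G'$ can fail to have positive rank when viewed (plus one chip) on $G$. No mechanism is offered to control this. Finally, note that the parity bookkeeping ($\lfloor (g+3)/2 \rfloor$ grows by $1$ as $g$ grows by $2$) only shows the induction scheme is arithmetically consistent; it carries no mathematical content. In short, the proposal correctly identifies the difficulty but does not overcome it, and the statement remains unproven both in the paper and in your write-up.
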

The corresponding result for metric graphs was proven by Baker \cite[Thm.\ 3.12]{Baker-specialization} using algebraic geometry.
A purely combinatorial proof of this result was recently found by Draisma and Vargas \cite{Draisma-Vargas}, with many promising avenues still to be explored \cite{Draisma-Vargas-notices}.
However, for discrete graphs, \autoref{conj:Brill-Noether} is still wide open.\footnote{A proof of \autoref{conj:Brill-Noether} (and more generally \cite[Conj.~3.9(1)]{Baker-specialization}) was given by Caporaso \cite[Thm.~6.3]{Caporaso}, but a gap in this proof was later pointed out by Sam Payne and reported by Baker and Jensen in \cite[Rmk.~4.8 and footnote 5]{Baker-Jensen}. To our knowledge, this has not been repaired.}
Partial results were obtained by Atanasov and Ranganathan \cite{Atanasov-Ranganathan}, who proved \autoref{conj:Brill-Noether} for all graphs of genus at most $5$, and by Aidun and Morrison \cite{Aidun-Morrison}, who proved the conjecture for Cartesian product graphs.

The most straightforward approach to \autoref{conj:Brill-Noether} would be to show that the divisorial gonality of a graph is equal to the divisorial gonality of the associated metric graph with unit lengths.
This is the second remaining conjecture of Baker's paper \cite[Conj.~3.14]{Baker-specialization}.
Given a multigraph $G$ and an integer $k \geq 1$, let $\sigma_k(G)$ denote the multigraph obtained from $G$ by subdividing every edge into $k$ parts. The conjecture can then be stated as follows.

\begin{conjecture}[{\cite[Conjecture 3.14]{Baker-specialization}}]
	\label{conj:subdivision-metric}
	Let $G$ be a connected loopless multigraph, let $\Gamma(G)$ be the corresponding metric graph with unit edge lengths, and let $r \geq 1$. Then:
	\begin{enumerate}[label=(\alph*)]
		\item\label{itm:conj:subdivision} $\dgonr{G}{r} = \dgonr{\sigma_k(G)}{r}$ for all $k \geq 1$;
		\item\label{itm:conj:metric} $\dgonr{\Gamma(G)}{r} = \dgonr{G}{r}$.
	\end{enumerate}
\end{conjecture}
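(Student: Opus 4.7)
The plan is to prove part (a) first, and then derive part (b) from it using the standard correspondence between divisor classes on the unit-length metric graph $\Gamma(G,\one)$ and divisor classes on regular subdivisions of $G$.

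For part (a), the inequality $\dgonr{\sigma_k(G)}{r} \leq \dgonr{G}{r}$ is the routine direction: a rank $r$ divisor $D$ on $G$, viewed as a divisor on $\sigma_k(G)$ supported on $V(G) \subseteq V(\sigma_k(G))$, still has rank at least $r$. Indeed, given an effective divisor $E'$ on $\sigma_k(G)$ of degree $r$, one can fire ``prefixes'' of subdivision vertices along each edge to shift chips along that edge, and combine these moves with the rank $r$ property of $D$ on $G$ to produce an effective divisor equivalent to $D$ and dominating $E'$.

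The substantive direction is $\dgonr{G}{r} \leq \dgonr{\sigma_k(G)}{r}$. The natural attack is to take a rank $r$ divisor $D$ on $\sigma_k(G)$ of minimal degree and produce an equivalent divisor $\widetilde{D}$ supported on $V(G)$; restricting $\widetilde{D}$ to $V(G)$ should then yield a rank $r$ divisor on $G$ of the same degree, with the rank lower bound coming from the prefix-firing argument run in reverse. To produce $\widetilde{D}$ I would pass to the unique $q$-reduced representative $D_q$ for some basepoint $q \in V(G)$, and hope that the $q$-reduced representative of a low-degree class is already supported on $V(G)$; failing that, I would try to iteratively move the interior chips of $D_q$ back to original vertices by further equivalences, controlling degrees using the minimality of $D$.

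The main obstacle is precisely this last step. There is no a priori reason that the $q$-reduced representative of a rank $r$ class on $\sigma_k(G)$ should avoid interior subdivision vertices, and a naive Laplacian move that pushes an interior chip onto $V(G)$ can change the induced class on $G$ in a non-trivial way. A successful proof of part (a) would therefore require either a refined reduction algorithm that provably terminates on $V(G)$, or a direct combinatorial construction that transfers rank data from $\sigma_k(G)$ to $G$ without going through such an ambient equivalence. If no such reduction exists, then the offending divisor simultaneously refutes part (b), since the same integer-valued divisor is a divisor on $\Gamma(G,\one)$ of equal degree and rank. Granting part (a), however, part (b) follows from upper semi-continuity of the rank and density of rational chip positions: every rank $r$ class on $\Gamma(G,\one)$ admits an integer-valued representative supported on $V(\sigma_k(G))$ for some $k \geq 1$, giving $\dgonr{\Gamma(G,\one)}{r} = \inf_{k \geq 1} \dgonr{\sigma_k(G)}{r} = \dgonr{G}{r}$.
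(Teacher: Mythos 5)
There is a fatal problem: the statement you are trying to prove is a \emph{conjecture} (Baker's Conjecture 3.14), and the main point of this paper is that it is \textbf{false}. The ``main obstacle'' you flag for part (a) --- that a minimal-degree rank $r$ divisor on $\sigma_k(G)$ might not be movable onto $V(G)$ without increasing its degree --- is not a technical difficulty to be engineered around; it is exactly what goes wrong. \autoref{thm:counterexample} constructs graphs (the ``tricycle graphs'' of \mysecref{sec:main-counterexample}) with $\dgon(G)=6$ but $\dgon(\sigma_2(G))=\dgon(\Gamma(G))=5$. The optimal degree-$5$ divisor on $\sigma_2(G)$ (\autoref{prop:special-divisor}) puts two chips on the central vertex and one chip on the \emph{midpoint} of each of the three transition edges, and \autoref{lem:graad-5} shows that every positive-rank $v_0$-reduced divisor of degree $\leq 5$ on such a graph must have this shape --- so the interior chips genuinely cannot be pushed back onto $V(G)$, and $\dgon(G)=6$ (\autoref{thm:2-subdivision-counterexample}). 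Hence no ``refined reduction algorithm that provably terminates on $V(G)$'' can exist, and part (a) already fails for $r=1$, $k=2$.

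The salvageable content of your proposal is the reduction between the two parts, and there your instinct matches the paper: \autoref{thm:two-conjectures} proves $\dgonr{\Gamma(G)}{r}=\min_{k}\dgonr{\sigma_k(G)}{r}$, so (a) and (b) stand or fall together, and your remark that an irremovable interior divisor ``simultaneously refutes part (b)'' is correct. However, your justification of that reduction by ``upper semi-continuity of the rank and density of rational chip positions'' is too quick: density of $\Q$-points does not by itself let you perturb the support of a rank $r$ divisor onto rational points while preserving rank. The paper does this via \autoref{thm:rounding}, rescaling edge lengths so that the supports of $D$ and of finitely many witnessing divisors $D_E$ land on $\Q$-points, using \autoref{lem:rational-LP} to find a rational solution of the linear system \eqref{eqn:rounding:linear-system}--\eqref{eqn:rounding:linear-system-ii} that preserves the slopes of the witnessing rational functions and the isometry type of $\Gamma$. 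That argument (together with the rank-determining-set machinery of \autoref{thm:rank-determining-set} and \autoref{thm:rank-determining-homeomorphism}) is what actually makes the equivalence work.
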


The first main result of this paper is that \myautoref{conj:subdivision-metric}{itm:conj:subdivision} and \myautoref{conj:subdivision-metric}{itm:conj:metric} are equivalent for every graph $G$.

\begin{theorem}
	\label{thm:two-conjectures}
	For every connected loopless multigraph $G$ and every integer $r \geq 1$, one has
	\[ \dgonr{\Gamma(G)}{r} = \min_{k\in\N_1} \dgonr{\sigma_k(G)}{r}. \]
\end{theorem}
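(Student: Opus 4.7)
The plan is to prove the two inequalities separately, with ``$\le$'' following directly from known rank invariance results and ``$\ge$'' being the main content of the theorem.

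For $\dgonr{\Gamma(G)}{r} \le \dgonr{\sigma_k(G)}{r}$: any divisor $D$ on the subdivision $\sigma_k(G)$ is naturally a divisor on $\Gamma(G)$ with support contained in $V(\sigma_k(G)) \subset \Gamma(G)$. By the invariance of rank under refinement to a loopless model (cf.\ \cite{Hladky-Kral-Norine}), the discrete rank of $D$ on $\sigma_k(G)$ equals its metric rank on $\Gamma(G)$. Choosing $D$ to realize $\dgonr{\sigma_k(G)}{r}$ and minimizing over $k$ yields this direction.

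For the reverse inequality, let $n := \dgonr{\Gamma(G)}{r}$ and fix a divisor $D$ on $\Gamma(G)$ of degree $n$ with $\rank(D) \ge r$. By the same rank invariance, it suffices to exhibit $k \ge 1$ and a divisor $D'$ on $\Gamma(G)$ of degree $n$ and $\rank(D') \ge r$ whose support lies in $V(\sigma_k(G))$: such a $D'$, viewed on $\sigma_k(G)$, witnesses $\dgonr{\sigma_k(G)}{r} \le n$. One cannot in general take $D' = D$, nor find a linearly equivalent representative of $[D]$ supported on any $V(\sigma_k(G))$: the Jacobian of $\Gamma(G)$ is a real torus whose ``rational'' points (classes with a representative supported on some $V(\sigma_k(G))$) form a dense but proper subgroup, so a generic class admits no such representative. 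What is needed instead is a possibly \emph{different} class in $\{[D] : \deg D = n,\ \rank D \ge r\}$ that does admit one.

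The strategy for producing such a class is polyhedral. Using Luo's theorem on rank-determining sets \cite{Luo}, the condition $\rank D \ge r$ reduces to finitely many tests: for a fixed basepoint $q \in V(G)$ and a finite family of effective test divisors $E$ of degree $r$, one checks that the $q$-reduced form of $D - E$ is effective. As the chips of $D$ move continuously along the edges of $\Gamma(G)$, each such reduced divisor depends piecewise-linearly on the chip positions, and effectivity is a system of linear inequalities. Because $\Gamma(G)$ has unit edge lengths, all coefficients are rational, so the rank-$\ge r$ locus inside $\Div^n(\Gamma(G))$ is a non-empty finite union of rational polytopes; any such polytope contains rational points, yielding the desired $D'$.

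The principal obstacle is making this polyhedrality step rigorous. Concretely, one has to partition $\Div^n(\Gamma(G))$ into finitely many cells indexed by the combinatorial type of a divisor (which edge each chip occupies and the order of chips on each edge), and then show that within each cell the $q$-reduced form of $D - E$ is piecewise-linear in the chip coordinates, with pieces cut out by rational linear inequalities. This amounts to a uniform analysis of Dhar's burning algorithm as the chip positions vary continuously, and this bookkeeping is the technical heart of the argument; the rest is essentially formal once the polyhedral picture is in hand.
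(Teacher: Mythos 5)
Your outline of the easy inequality and your diagnosis of why one cannot simply move $D$ (or a linearly equivalent representative) onto $V(\sigma_k(G))$ are both correct. However, the hard direction rests entirely on the assertion that the locus $\{D \in \Div_+^n(\Gamma(G)) : \rank(D) \geq r\}$, parametrized by chip positions, is a finite union of rational polytopes, and you explicitly defer the justification of this ("the technical heart") rather than supply it. That justification is not a routine bookkeeping exercise: it requires showing that the $q$-reduced representative of $D - E$ varies piecewise linearly, with rational breakpoints, as the chips of $D$ move continuously along edges — i.e., a uniform, parametric analysis of the metric Dhar/reduction procedure, including continuity of the reduction map and control of how its combinatorial type changes. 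None of the tools quoted in the paper give this directly, so as written the proposal proves the easy inequality and reduces the hard one to an unproven (though true) structural claim that carries essentially all of the content of the theorem.

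It is worth seeing how the paper avoids this difficulty, because the contrast is instructive. Instead of varying the divisor inside the fixed metric graph, the paper fixes, for each test divisor $E \in \Div_+^r(S)$ (with $S$ a finite rank-determining set), a witness $D_E \geq E$ with $D_E = D + \mdiv(f_E)$, and then refines the model so that its vertex set contains $S$, $\supp(D)$, and all $\supp(D_E)$; the integer slopes of each $f_E$ on the edges of this refined model are then frozen, and the \emph{edge lengths} become the variables. The constraints are a single rational linear system: the weighted sums of slopes around cycles must vanish (so the frozen slope data still defines rational functions) and the total lengths of the original edges must be preserved (so the rescaled graph is isometric to $\Gamma(G)$). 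The original length vector is a positive real solution, so a positive rational solution exists (the paper's \autoref{lem:rational-LP}), and rationality of the new lengths forces $\supp(D')$ into some $V(\sigma_k(G))$ while the frozen slope data immediately certifies $\rank(D') \geq r$. In other words, the paper gets the "rational polytope" for free by choosing coordinates in which the rank certificate is manifestly linear, which is exactly the step your parametrization makes hard. If you want to salvage your route, you would need to prove the piecewise-linearity of the reduction map (or cite it, e.g.\ from the literature on reduced divisors on metric graphs); otherwise I recommend switching to the change-of-coordinates above.
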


It is already known that every rank $r$ divisor on $\sigma_k(G)$ also defines a rank $r$ divisor on $\Gamma(G)$. For the converse, we show that every rank $r$ divisor $D$ on $\Gamma(G)$ can be ``rounded'' to a nearby divisor $D'$ with $\rank(D') \geq r$ which is supported on the $\Q$-points of $\Gamma(G)$, and therefore on the points of some regular subdivision $\sigma_k(G)$. The details will be given in \mysecref{sec:two-conjectures}.

As pointed out by Baker in \cite{Baker-specialization}, a positive answer to \autoref{conj:subdivision-metric} would also yield a positive answer to \autoref{conj:Brill-Noether}.
However, it turns out that the subdivision conjecture fails, and we give a counterexample to \myautoref{conj:subdivision-metric}{itm:conj:subdivision} in the case $r = 1$ and $k = 2$.
Evidently this is also a counterexample to \myautoref{conj:subdivision-metric}{itm:conj:metric}.
The second main result of this paper is the following.

\begin{theorem}
	\label{thm:counterexample}
	For every integer $k \geq 1$, there exists a connected loopless multigraph $G_k$ such that $\dgon(G_k) = 6k$ and $\dgon(\Gamma(G_k)) = \dgon(\sigma_2(G_k)) = 5k$. Furthermore, $G_k$ can be chosen simple and bipartite.
\end{theorem}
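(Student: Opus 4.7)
My plan is to exhibit, for each $k \geq 1$, a concrete simple bipartite multigraph $G_k$ and carry out three independent tasks: (a) produce an effective divisor of degree $5k$ and rank $\geq 1$ on $\sigma_2(G_k)$, giving $\dgon(\sigma_2(G_k)) \leq 5k$; (b) show by a combinatorial argument that every effective divisor of degree $\leq 6k-1$ on $G_k$ has rank $0$, giving $\dgon(G_k) \geq 6k$; and (c) glue everything together. For (c), the trivial inequality $\dgon(\Gamma(G_k)) \leq \dgon(\sigma_2(G_k))$ combined with Theorem \ref{thm:two-conjectures} (which gives $\dgon(\Gamma(G_k)) = \min_j \dgon(\sigma_j(G_k))$) and with the obvious $\dgon(\sigma_j(G_k)) \leq \dgon(G_k)$ forces the chain $5k \leq \dgon(\Gamma(G_k)) \leq \dgon(\sigma_2(G_k)) \leq 5k$, while (b) and the $j=1$ case of the same chain pin $\dgon(G_k) = 6k$.

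For the construction, I would first design a small base graph $H$ exhibiting the qualitative phenomenon (so $\dgon(H) = 6$ and $\dgon(\sigma_2(H)) \leq 5$), and then define $G_k$ by a scaling operation that multiplies both quantities linearly in $k$. Natural candidates for the scaling step are to replace each edge of $H$ by a bundle of $k$ internally disjoint paths of odd length (preserving bipartiteness and simplicity), or to take $k$ parallel copies glued along a distinguished set of vertices. The base graph $H$ is the creative ingredient; presumably its edges can be subdivided so that chips placed at the new midpoint vertices simultaneously cover two endpoints that would otherwise require separate chips in the discrete setting.

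Step (a) should be the more routine verification. I would write down an explicit divisor $D_k$ supported at the midpoints of a carefully chosen set of subdivided edges, and prove $\rank(D_k) \geq 1$ via Dhar's burning algorithm: for each vertex $v$ of $\sigma_2(G_k)$, compute the $v$-reduced representative of $D_k$ and check that a chip remains at $v$. The large automorphism group of the scaled construction should reduce this to checking only a handful of orbits of vertices.

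Step (b) is where I expect the main obstacle. Generic lower bounds (treewidth, scramble number) may fall short of $6k$, so a tailored argument is likely needed. I would suppose for contradiction that some effective divisor $D$ of degree $6k-1$ has positive rank, pass to a $v$-reduced representative $D'$ for a well-chosen vertex $v$, and then either (i) design a scramble on $G_k$ of order $6k$ adapted to the gadget structure, or (ii) argue directly that each of the $k$ gadgets contributes at least $6$ chips to any $v$-reduced effective divisor that has a chip at every targeted vertex, whereas on $\sigma_2(G_k)$ the presence of midpoint vertices drops this amortized cost to $5$ per gadget. Converting this $6$-vs-$5$ accounting heuristic into a rigorous case analysis of $v$-reduced divisors -- using the fact that the Dhar burning on $G_k$ cannot exploit intermediate "half-edge" positions -- is the technical heart of the argument.
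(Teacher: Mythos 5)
Your outline reproduces the paper's overall architecture (a base gadget with $\dgon = 6$ and $\dgon(\sigma_2) = 5$, an amplification step, and \autoref{thm:two-conjectures} to tie in the metric graph), but as it stands it has three genuine gaps. First, the base graph is entirely unspecified: exhibiting a graph $H$ with $\dgon(H)=6$ and $\dgon(\sigma_2(H))=5$, and proving the lower bound $\dgon(H)\geq 6$, is the main content of the paper (the ``tricycle'' construction and the Dhar-algorithm case analysis of \autoref{lem:graad-5} and \autoref{thm:2-subdivision-counterexample}); ``presumably its edges can be subdivided so that midpoint chips cover two endpoints at once'' is a description of the desired phenomenon, not an argument. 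Second, your proposed amplification steps are not known to work. Replacing each edge of $H$ by a bundle of $k$ internally disjoint paths does not in general multiply gonality by $k$, and even if it did for your $H$, it changes the graph in a way that breaks any previously established analysis of $\sigma_2$; gluing $k$ copies along a distinguished vertex set likewise gives no a priori additivity of gonality. The paper instead keeps the $k$ gadgets disjoint and joins their base vertices by $t \geq 6k$ parallel subdivided edges, precisely so that no valid firing set of a low-degree divisor can separate the base vertices; this is what makes the additivity statement $\dgon(G) = \sum_i \dgon(G_i)$ provable (\autoref{lem:skewer}), and its proof (reducedness at every $w_i$, restriction to positive-rank divisors on each gadget) is itself nontrivial.

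Third, and independently of the construction, your ``forced chain'' $5k \leq \dgon(\Gamma(G_k)) \leq \dgon(\sigma_2(G_k)) \leq 5k$ does not follow from steps (a) and (b). Step (a) gives only the upper bound $\dgon(\sigma_2(G_k)) \leq 5k$, and step (b) bounds $\dgon(G_k) = \dgon(\sigma_1(G_k))$ from below; but $\dgon(\Gamma(G_k)) = \min_{j} \dgon(\sigma_j(G_k))$, so to conclude $\dgon(\Gamma(G_k)) \geq 5k$ you must bound $\dgon(\sigma_j(G_k))$ from below \emph{uniformly in $j$}, which neither (a) nor (b) addresses. The paper does exactly this: \autoref{lem:graad-5} and \autoref{cor:dgon-subdivision} are proved for an \emph{arbitrary} subdivision of the minimal tricycle, not just for $G$ and $\sigma_2(G)$, and the skewering lemma is then applied to every $\sigma_s(G)$. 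You would need to strengthen your step (b) to a statement about all subdivisions of the base gadget (or find another route to a lower bound on the metric gonality) before the conclusion about $\dgon(\Gamma(G_k))$ is available.
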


The proof is constructive and consists of two parts.
In \mysecref{sec:main-counterexample}, we construct a family of graphs with $\dgon(G) = 6$ and $\dgon(\Gamma(G)) = \dgon(\sigma_2(G)) = 5$.
The graphs $G_k$ are then constructed in \mysecref{sec:skewered} by combining $k$ of these graphs in a certain way.

Although the difference between $\dgon(G)$ and $\dgon(\Gamma(G))$ can be large, as in \autoref{thm:counterexample}, the ratio between them is at most $2$, as we show in \autoref{prop:bound}.
Hence, for the gap to get arbitrarily large, it is necessary that $\dgon(\Gamma(G))$ goes to infinity.

In \mysecref{sec:closing-remarks}, we list a few additional counterexamples (without proof), including a $3$-regular graph.
Although all counterexamples in this paper violate \autoref{conj:subdivision-metric}, they nevertheless satisfy the Brill--Noether bound.
We do not know whether any of these examples can be extended to disprove \autoref{conj:Brill-Noether}.
Additional open problems are discussed in \mysecref{sec:closing-remarks} as well.

\section{Preliminaries}
\label{sec:prelims}
Throughout this paper, a \emph{graph} will be a finite, connected, loopless multigraph. In other words, parallel edges are allowed, but self-loops are not.

\subsection{Divisors on graphs}

A \emph{divisor} on a graph $G$ is an element of the free abelian group on $G$.
In other words, a divisor is a formal sum $\sum_{v \in V(G)} a_v v$, where $a_v \in \Z$ for all $v$.
If $D$ is a divisor on $G$ and if $w \in V(G)$, then we use the notation $D(w)$ to denote the coefficient $a_w$ of $w$ in $D$.
The \emph{support} $\supp(D)$ of a divisor $D$ is the set of all $v$ for which $D(v) \neq 0$.

For two divisors $D$ and $D'$, we write $D \geq D'$ if $D(v) \geq D'(v)$ for all $v$.
A divisor $D$ is called \emph{effective} if $D \geq 0$.
The sets of all divisors and all effective divisors on $G$ are denoted by $\Div(G)$ and $\Div_+(G)$, respectively.

The \emph{degree} of a divisor is the sum of its coefficients: $\deg(D) := \sum_{v\in V(G)} D(v)$.
The set of all effective divisors of degree $d$ on $G$ is denoted $\Div_+^d(G)$.

The Laplacian matrix $L_G$ of $G$ defines a map $\Z^{V(G)} \to \Div(G)$, $x \mapsto L_Gx$.
Divisors in the image of this map are called \emph{principal divisors}.
Two divisors $D,D' \in \Div(G)$ are \emph{equivalent} if $D - D'$ is a principal divisor.

Equivalence of divisors can also be described in terms of the following chip-firing game.
An effective divisor $D$ is interpreted as a distribution of chips over the vertices of $G$, where $D(v)$ is the number of chips on $v$.
A subset $A \subseteq V(G)$ is \emph{valid} (with respect to $D$) if $D(v) \geq |\{uv \in E(G) \mid u\notin A\}|$ for all $v\in A$.
If $A$ is valid, then to \emph{fire $A$} is to move chips from $A$ to $V(G) \setminus A$, one for every edge of the cut $(A, V(G) \setminus A)$.
This yields a new divisor $D'$ given by $D' = D - L_G\one_A$, where $\one_A$ is the characteristic vector of $A$.
Since $A$ is valid, this new divisor $D'$ is again effective.
All equivalent effective divisors can be reached in this way:

\begin{proposition}[{\cite[Lem.~2.3]{JosseDionTreewidth}}]
	\label{prop:increasing-firing-sets}
	Let $D,D'$ be equivalent effective divisors. Then $D'$ can be obtained from $D$ by subsequently firing an increasing sequence of valid subsets
	\[ \varnothing \subsetneq U_1 \subseteq U_2 \subseteq \cdots \subseteq U_k \subsetneq V. \]
\end{proposition}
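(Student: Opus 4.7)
The plan is to construct the firing sequence explicitly from an integer vector witnessing the equivalence $D \sim D'$. By definition there exists $x \in \Z^{V(G)}$ with $D' = D - L_G x$; since $L_G \one_{V(G)} = 0$, I may shift $x$ by an integer multiple of $\one_{V(G)}$ and assume $x \geq 0$ with $\min_v x(v) = 0$. If $x = 0$ there is nothing to prove, so set $m := \max_v x(v) \geq 1$ and define the level sets
\[ U_i := \{v \in V(G) : x(v) \geq m - i + 1\} \qquad (i = 1, \ldots, m). \]
Then $\varnothing \subsetneq U_1 \subseteq U_2 \subseteq \cdots \subseteq U_m \subsetneq V(G)$, with strictness at the left coming from $m \geq 1$ and at the right from $\min x = 0$. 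A direct computation shows $\sum_{i=1}^{m} \one_{U_i} = x$, so firing these sets in \emph{any} order transforms $D$ into $D'$; the content of the proposition is that firing them in the order written above keeps every intermediate divisor effective, which is exactly what it means for each $U_i$ to be valid relative to the preceding divisor.

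I would prove this by induction on $m$. For the base case, take $v \in U_1$, so $x(v) = m$. Expanding $D'(v) = D(v) - (L_G x)(v)$ yields
\[ D(v) = D'(v) + \sum_{uv \in E(G)} \bigl(m - x(u)\bigr), \]
with edges counted with multiplicity. Terms with $u \in U_1$ vanish since then $x(u) = m$, and each term with $u \notin U_1$ contributes at least $1$ because then $x(u) \leq m - 1$. Combined with $D'(v) \geq 0$, this gives $D(v) \geq |\{uv \in E(G) : u \notin U_1\}|$, which is precisely the validity condition for $U_1$ with respect to $D$. Firing $U_1$ therefore yields an effective divisor $D_1 := D - L_G \one_{U_1}$.

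For the inductive step, set $x_1 := x - \one_{U_1}$, so that $D' = D_1 - L_G x_1$. A short check gives $x_1 \geq 0$, $\min x_1 = 0$, and $\max x_1 = m - 1$, and the level sets built from $x_1$ by the same recipe are precisely $U_2 \subseteq U_3 \subseteq \cdots \subseteq U_m$; the induction hypothesis applied to $(D_1, D')$ then supplies the remainder of the firing sequence. The only real obstacle is the validity check at the first step: it relies on the fact that $U_1$ is exactly the top level set, so every neighbour contributing to the boundary cut has a strictly smaller $x$-value. Once this is in place, the induction runs without incident because removing the top layer of $x$ preserves the structure of the remaining level sets.
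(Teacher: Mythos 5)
Your proof is correct. The paper itself gives no proof of this proposition (it is quoted from \cite{JosseDionTreewidth}), but your argument --- normalize the witness $x$ to be nonnegative with minimum $0$, take the decreasing level sets $U_1 \subseteq \cdots \subseteq U_m$, verify validity of the top level set from $D'(v) \geq 0$ and $x(u) \leq m-1$ for $u \notin U_1$, and induct on $\max x$ --- is precisely the standard level-set decomposition used in that reference, so it matches the intended proof.
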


\noindent
The \emph{rank} of a divisor $D \in \Div(G)$ is defined as
\[ \rank(D) := \max\{k \in \Z \mid D - E \ \text{is equivalent to an effective divisor for all $E \in \Div_+^k(G)$}\}. \]
We have $\rank(D) = -1$ if and only if $D$ is not equivalent to an effective divisor.

Given a graph $G$ and an integer $r \geq 1$, the \emph{$r$-th \textup(divisorial\textup) gonality} $\dgonr{G}{r}$ of $G$ is the minimum degree of a rank $r$ divisor on $G$. For $r = 1$, this is simply called the \emph{\textup(divisorial\textup) gonality} of $G$: $\dgon(G) := \dgonr{G}{1}$.

\subsection{Reduced divisors and Dhar's burning algorithm}

Let $G$ be a graph, and let $q \in V(G)$.
A divisor $D \in \Div(G)$ is called \emph{$q$-reduced} if $D(v) \geq 0$ for all $v \in V(G) \setminus \{q\}$ and every non-empty valid set contains $q$.
Every divisor $D$ is equivalent to a unique $q$-reduced divisor; see \cite[Prop.~3.1]{BN-Riemann-Roch}.
A divisor $D$ has rank at least 1 if and only if for every vertex $v$, the $v$-reduced divisor $D_v$ equivalent to $D$ has at least one chip on $v$.

Dhar's burning algorithm \cite{Dhar} takes as input a graph $G$, a divisor $D$ and a vertex $q$, and returns the maximal valid set $A \subseteq V(G) \setminus \{q\}$. In particular, the set $A$ returned by Dhar's burning algorithm is empty if and only if $D$ is $q$-reduced.

\begin{algorithm}[ht]
	\SetKwInOut{KwIn}{Input}
	\SetKwInOut{KwOut}{Output}
	\KwIn{A triple $(G,D,q)$, where $G$ is a graph, $D \in \Div_+(G)$, and $q \in V(G)$.}
	\KwOut{The maximal valid subset $A \subseteq V(G) \setminus \{q\}$.}
	\BlankLine
	Burn vertex $q$\;
	Burn all edges incident to burned vertices\;\label{line:Dhar2}
	If a vertex $v$ is incident to more burned edges than it has chips, burn $v$\;\label{line:Dhar3}
	Repeat steps \ref{line:Dhar2} and \ref{line:Dhar3} until no more edges or vertices are burned\;
	\KwRet{$\{v \in V(G) \mid v\ \text{is not burned}\}$}
	\caption{Dhar's burning algorithm for finite graphs}
	\label{alg:Dhar}
\end{algorithm}

\subsection{Metric graphs}

A \emph{metric graph} is a metric space $\Gamma$ that can be obtained in the following way.
Let $G$ be a finite multigraph and let $\ell\colon E(G) \to \R_{>0}$ be an assignment of lengths to the edges of $G$.
To construct $\Gamma$, take an interval $[0, \ell(e)]$ for every edge $e\in E(G)$, and glue these together at the endpoints as prescribed by $G$.
To turn it into a metric space, equip $\Gamma$ with the shortest path metric in the obvious way.
The metric graph $\Gamma$ defined in this way will be denoted $\Gamma(G,\ell)$.
If $\ell = \one$ is the unit length function, we write $\Gamma(G) := \Gamma(G,\one)$.

If the metric graph $\Gamma$ is constructed from the pair $(G,\ell)$ as above, then we say that $(G,\ell)$ is a \emph{model} of $\Gamma$. We say that a model $(G,\ell)$ is \emph{loopless} (resp.{} \emph{simple}) if $G$ is loopless (resp.{} simple). The \emph{valency} $\val(v)$ of $v \in \Gamma$ is the number of edges incident with $v$ in any loopless model $(G,\ell)$ with $v \in V(G)$.

A \emph{divisor} on a metric graph $\Gamma$ is an element of the free abelian group on $\Gamma$.
In other words, a divisor is a formal sum $\sum_{v \in \Gamma} a_v v$ where $a_v \in \Z$ for all $v$, and $a_v = 0$ for all but finitely many $v$.
The notations $\supp(D)$, $\deg(D)$, $D \geq D'$, $\Div(\Gamma)$, $\Div_+(\Gamma)$ and $\Div_+^d(\Gamma)$ are defined analogously to the discrete case.

The definition of equivalence is a bit different.
A \emph{rational function} on $\Gamma$ is a continuous piecewise linear function $f\colon \Gamma \to \R$ with integral slopes.
For each point $v \in \Gamma$, let $a_v$ be the sum of the outgoing slopes of $f$ in all edges incident to $v$ in some appropriate model of $\Gamma$.
The corresponding divisor $\sum_{v\in \Gamma} a_v v$ is called a \emph{principal divisor}.
Two divisors $D$ and $D'$ are \emph{equivalent} if $D - D'$ is a principal divisor.

The \emph{rank} of a divisor $D \in \Div(\Gamma)$ is defined as in the discrete case; that is:
\[ \rank(D) := \max\{k \in \Z \mid D - E \ \text{is equivalent to an effective divisor for all $E \in \Div_+^k(\Gamma)$}\}. \]
The \emph{$r$-th \textup(divisorial\textup) gonality} $\dgonr{\Gamma}{r}$ of $\Gamma$ is the minimum degree of a rank $r$ divisor on $\Gamma$. For $r = 1$, this is simply called the \emph{\textup(divisorial\textup) gonality} of $\Gamma$: $\dgon(\Gamma) := \dgonr{\Gamma}{1}$.

If $G$ is a finite graph and if $\Gamma := \Gamma(G)$ is the corresponding metric graph with unit lengths, then two divisors $D,D' \in \Div(G)$ are equivalent on $G$ if and only if they are equivalent on $\Gamma$; see \cite[Rmk.~1.3]{Baker-specialization}. Furthermore, in this case one has $\rank_G(D) = \rank_\Gamma(D)$ for every divisor $D \in \Div(G)$; see \cite[Thm.~1.3]{Hladky-Kral-Norine}.

\subsection{Rank-determining sets and strong separators}

Let $\Gamma$ be a metric graph, and let $S \subseteq \Gamma$ be a subset.
Following \cite{Luo}, we define the \emph{$S$-restricted rank} of a divisor $D \in \Div(\Gamma)$ as
\[ \rank_S(D) := \max\{k \mid D - E \ \text{is equivalent to an effective divisor for all $E \in \Div_+^k(S)$}\}, \]
where $\Div_+^k(S)$ is the set of degree $k$ effective divisors whose support is contained in $S$.
The set $S$ is \emph{rank-determining} if $\rank_S(D) = \rank(D)$ for all $D \in \Div(\Gamma)$.
The following theorems are due to Luo.

\begin{theorem}[{\cite[Thm.~1.6]{Luo}; see also \cite[Thm.~1.7]{Hladky-Kral-Norine}}]
	\label{thm:rank-determining-set}
	Let $\Gamma$ be a metric graph, and let $(G,\ell)$ be a loopless model of $\Gamma$.
	Then the set $V(G) \subseteq \Gamma$ is rank-determining.
\end{theorem}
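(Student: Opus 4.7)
The inequality $\rank_{V(G)}(D) \geq \rank(D)$ is immediate from the inclusion $\Div_+^k(V(G)) \subseteq \Div_+^k(\Gamma)$, so the entire content of the theorem lies in the reverse inequality. My plan is to fix a divisor $D$ with $\rank_{V(G)}(D) \geq k$, fix an arbitrary $E \in \Div_+^k(\Gamma)$, and show that $D - E$ is equivalent to an effective divisor, by induction on $m := |\supp(E) \setminus V(G)|$. The base case $m = 0$ is precisely the hypothesis.

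For the inductive step, pick $q \in \supp(E) \setminus V(G)$ lying in the interior of some edge $e = uv$ of the model $(G,\ell)$, set $n := E(q) \geq 1$, and write $E_0 := E - nq$. Each of the divisors $nu + E_0$ and $nv + E_0$ has the same total degree $k$ but at most $m - 1$ non-vertex support points, so the inductive hypothesis yields effective divisors equivalent to $D - nu - E_0$ and $D - nv - E_0$, respectively. It remains to conclude that $D - nq - E_0$ is also equivalent to an effective divisor, which is a one-dimensional problem along the edge $e$.

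For this last step, I would parametrize $e$ by $[0, \ell(e)]$, with $u \leftrightarrow 0$ and $v \leftrightarrow \ell(e)$, and study the set
\[ T := \{s \in [0, \ell(e)] : D - nq_s - E_0 \sim \text{effective}\}, \]
where $q_s$ denotes the point of $e$ at distance $s$ from $u$. The plan is to show $T = [0, \ell(e)]$ by a connectedness argument: prove $T$ is both closed and open, and invoke $\{0, \ell(e)\} \subseteq T$. Closedness follows readily from the compactness of $\Div_+^d(\Gamma)$ (the $d$-fold symmetric product of the compact space $\Gamma$, where $d = \deg(D - E_0) - n$) together with the fact that the principal divisors form a closed subgroup of $\Div^0(\Gamma)$, so that linear equivalence is a closed relation on $\Div^d(\Gamma)$.

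The main obstacle is the openness of $T$, and this is where I would expect the argument to get technical. Given a starting point $s_0 \in T$ with witness $D^* \geq 0$ satisfying $D^* \sim D - nq_{s_0} - E_0$, I need to produce, for all $s$ in a neighborhood of $s_0$, an effective divisor equivalent to $D - nq_s - E_0$. The naive attempt of adding $n(q_{s_0} - q_s)$ fails because this difference is generically not a principal divisor on $\Gamma$. Instead, the strategy is to first replace $D^*$ by its $q_{s_0}$-reduced representative and then use Dhar's burning algorithm (in its metric-graph incarnation) to identify a valid firing set whose boundary can be slid from $q_{s_0}$ to $q_s$ via a rational function supported on a small tubular neighborhood of the sub-edge between them. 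This firing move transports $n$ chips across the cut at $q_s$ and realizes the required effective representative. Making this local construction precise and uniformly controlled in $s$ is the technical crux that a complete proof must address.
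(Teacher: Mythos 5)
The paper does not prove this statement --- it is imported from Luo \cite[Thm.~1.6]{Luo} --- so there is no internal proof to compare against; I am judging your argument on its own merits.

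There is a genuine gap, and it is not merely the ``technical crux'' you flag at the end: the one-dimensional interpolation claim to which you reduce the inductive step is \emph{false}. At that point you retain only the two facts that $D - nu - E_0$ and $D - nv - E_0$ are equivalent to effective divisors and try to deduce the same for $D - nq_s - E_0$ for every $s$. Take $\Gamma$ to be a circle of circumference $1$ with model vertices $u,v$ at distance $\tfrac12$, and $D - E_0 = 2u$ with $n = 2$. Then $2u - 2u = 0$ is effective, and $2u - 2v$ is principal (it is $\mdiv(g)$ for $g(x) = d(x,u)$), but for $q_s$ strictly between $u$ and $v$ the class of $2u - 2q_s$ in the Jacobian $\R/\Z$ is $-2s \not\equiv 0$, so $2u - 2q_s$ is a nonzero degree-zero divisor class and is not equivalent to any effective divisor. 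Hence your set $T$ equals $\{0,\tfrac12\}$: closed and containing both endpoints, but not open. (A loop edge, which your argument also never excludes even though the theorem requires a loopless model, gives an even simpler failure.) This shows that no local ``slide the firing set'' construction can establish openness of $T$ from the two endpoint hypotheses alone; any correct proof must use the \emph{full} hypothesis $\rank_{V(G)}(D) \geq k$, i.e.\ that $D - E'$ is equivalent to an effective divisor for every $E' \in \Div_+^k(V(G))$ and not just the two instances you keep. (In the example above the full hypothesis indeed fails, since $2u - u - v = u - v$ is not equivalent to an effective divisor, so the theorem itself survives --- only your lemma does not.) This is precisely why Luo's proof goes through the machinery of reduced divisors and ``special open sets'' rather than edge-by-edge interpolation. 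Your first reduction (induction on $|\supp(E)\setminus V(G)|$) and the closedness of $T$ via compactness of the symmetric product and continuity of the Abel--Jacobi map are fine; the openness step is where a correct proof has to do essentially all of the work, and as proposed it proves too much.
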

\begin{theorem}[{\cite[Thm.~1.10]{Luo}}]
	\label{thm:rank-determining-homeomorphism}
	Let $\Gamma,\Gamma'$ be metric graphs, and let $\phi\colon \Gamma \to \Gamma'$ be a homeomorphism. Then $A \subseteq \Gamma$ is rank-determining if and only if $\phi[A] \subseteq \Gamma'$ is rank-determining.
\end{theorem}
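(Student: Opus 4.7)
The plan is to factor the problem into a combinatorial reduction step and an intrinsic characterization. First I would exploit the homeomorphism $\phi$ to put $\Gamma$ and $\Gamma'$ on a common combinatorial footing. Choose loopless models $(G,\ell)$ and $(G',\ell')$ of $\Gamma$ and $\Gamma'$. By taking successive subdivisions and passing to a common refinement, I can arrange that $\phi$ sends vertices to vertices and edges to edges, yielding models on a single combinatorial graph $H$: we have $\Gamma = \Gamma(H,\ell_H)$ and $\Gamma' = \Gamma(H,\ell_H')$, and $\phi$ restricts to a bijection $V(H) \to V(H)$ (which we may as well call the identity after relabeling). Further subdividing if necessary, I can also assume that the subset $A$ we care about is contained in $V(H)$, and then $\phi[A] = A$ as a subset of $V(H) \subseteq \Gamma'$. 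After this reduction, the theorem is equivalent to the following statement: for a fixed loopless multigraph $H$ and a subset $A \subseteq V(H)$, whether $A$ is rank-determining in $\Gamma(H,\ell)$ does not depend on the length function $\ell$.

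Second, I would prove this metric-invariance claim. The cleanest route is to give an intrinsic, combinatorial characterization of rank-determining subsets of $V(H)$ that does not mention $\ell$. Concretely, using \autoref{thm:rank-determining-set} together with the theory of $q$-reduced divisors on metric graphs (which are characterized, vertex by vertex, by a Dhar-style burning condition), one can show that $A \subseteq V(H)$ is rank-determining iff for every divisor $D \in \Div(\Gamma(H,\ell))$ and every $v \in V(H)$, the $v$-reduced divisor equivalent to $D$ has the same value at $v$ as some $a$-reduced divisor for $a \in A$, modulo the combinatorial data of $H$. This ``tropical'' reformulation turns the question into one about integer programs on $H$ whose feasibility is independent of $\ell$.

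The main obstacle is exactly this second step. Unlike the discrete case, the rank of a divisor on a metric graph genuinely depends on the edge lengths, so it is not obvious a priori that \emph{rank-determining} is a length-independent notion. A convenient way to bypass a direct combinatorial characterization would be the following continuity argument: for fixed $H$ and $A$, one shows that the set of lengths $\ell \in \R_{>0}^{E(H)}$ for which $A$ fails to be rank-determining is an open cone defined by finitely many strict linear inequalities in $\ell$ (coming from which sets get burned in Dhar's algorithm on $\Gamma(H,\ell)$ for some finite family of test divisors); since the set of lengths with all rational coordinates is dense and for such $\ell$ the question reduces via uniform subdivision to the discrete setting covered by \autoref{thm:rank-determining-set}, the desired length-invariance follows.

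Finally, once the metric invariance is established, the theorem is immediate: $A$ is rank-determining in $\Gamma = \Gamma(H,\ell_H)$ iff $A$ is rank-determining in $\Gamma(H,\ell_H')$, and the latter is $\phi[A]$ sitting inside $\Gamma'$. The bulk of the work is therefore in the combinatorial/length-invariance step; the reduction to a common model $H$ and the final combination are essentially bookkeeping.
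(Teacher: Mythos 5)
This statement is not proved in the paper at all: it is imported verbatim from Luo \cite{Luo} (Thm.~1.10), so there is no internal proof to compare against. Judged on its own merits, your proposal has a genuine gap, and it sits exactly where the whole content of the theorem lies: the claim that ``$A$ is rank-determining in $\Gamma(H,\ell)$'' is independent of the length function $\ell$. Since the rank of an individual divisor on a metric graph really does depend on the edge lengths (as you note), this invariance is the theorem, and neither of your two routes to it works. The ``tropical reformulation'' is not actually a statement one can check -- ``the $v$-reduced divisor has the same value at $v$ as some $a$-reduced divisor, modulo the combinatorial data of $H$'' does not define a length-free condition, and no argument is given that it is equivalent to being rank-determining. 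The continuity/density argument fails on three counts: (i) the failure of rank-determinacy quantifies over \emph{all} divisors $D$ (of all degrees), so there is no reason the failure locus in $\R_{>0}^{E(H)}$ is cut out by finitely many strict inequalities, nor that it is open; (ii) even granting openness and density of rational length vectors, you would only conclude that a failure at some $\ell$ forces a failure at some rational $\ell'$ -- to get invariance you would need the failure locus to be empty or everything, which openness does not give; and (iii) the claimed reduction of the rational case to \autoref{thm:rank-determining-set} is a non sequitur, because that theorem only says the \emph{full} vertex set $V(G)$ of a model is rank-determining and says nothing about whether a given proper subset $A \subseteq V(G)$ is.

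There is also a secondary gap in your reduction step: $A$ is an arbitrary subset of $\Gamma$, possibly infinite or dense, so you cannot subdivide so that $A \subseteq V(H)$ for a finite model $H$; you would first have to show that rank-determinacy of $A$ is detected by its finite subsets, which is not obvious. The missing idea that makes Luo's proof work is a purely \emph{topological} characterization of rank-determining sets: Luo proves that $A$ is rank-determining if and only if $A$ meets every nonempty ``special open set'' of $\Gamma$, where special open sets are defined in homeomorphism-invariant terms. Once that criterion is in hand, \autoref{thm:rank-determining-homeomorphism} is immediate; without it (or some substitute of comparable strength), the length-invariance you need in step two is unsupported.
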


We also formulate a discrete analogue of \autoref{thm:rank-determining-set} for the case $r = 1$.
If $G$ is a graph, then we say that a divisor $D \in \Div(G)$ \emph{reaches} the vertex $v \in V(G)$ if there is an effective divisor $D'$ equivalent to $D$ with $D'(v) > 0$.
Furthermore, we say that a set $S \subseteq V(G)$ is a \emph{strong separator} if for every connected component $C$ of $V(G) \setminus S$ we have that $C$ is a tree and for every $s \in S$ there is at most one edge (in $G$) between $C$ and $s$.

\begin{theorem}[{\cite[Lem.~2.6]{JosseDionTreewidth}}]
	\label{thm:strong-separator}
	Let $G$ be a graph, and let $S \subseteq V(G)$ be a strong separator. If $D \in \Div(G)$ reaches every $s \in S$, then $\rank(D) \geq 1$.
\end{theorem}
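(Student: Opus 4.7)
My plan is to use the characterisation $\rank(D) \geq 1$ iff $D$ reaches every vertex of $G$. Since $D$ reaches every $s \in S$ by hypothesis, it suffices to prove that $D$ reaches every $v \in V(G) \setminus S$. Fix such a $v$, and let $C$ be the component of $V(G)\setminus S$ containing $v$, which by assumption is a tree with at most one edge to each $s\in S$. Let $D^*$ denote the unique $v$-reduced divisor equivalent to $D$. Because $D$ is equivalent to an effective divisor (it reaches at least one $s$) and the standard reduction procedure only fires sets not containing $v$, the chip count at $v$ can only weakly increase; hence $D^*$ is effective, and $D$ reaches $v$ iff $D^*(v) \geq 1$. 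Assume for contradiction that $D^*(v) = 0$.

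For any $s \in S$ let $D_s$ be the $s$-reduced divisor equivalent to $D$; it is effective with $D_s(s)\geq 1$. By \autoref{prop:increasing-firing-sets}, $D^*$ is obtained from $D_s$ by firing an increasing chain of non-empty valid subsets $U_1 \subseteq U_2 \subseteq \cdots \subseteq U_k$. Since $D_s$ is $s$-reduced, every non-empty valid set contains $s$, so $s\in U_1$; since $D^*$ is $v$-reduced, the firing vector $f = \sum_i \one_{U_i}$ can be normalised so that $f(v)=0$, i.e.\ $v \notin U_i$ for all $i$. The chip balance at $v$ then reads
\[ 0 \;=\; D^*(v) \;=\; D_s(v) + \sum_{i=1}^{k}\bigl|E_G(v,U_i)\bigr|, \]
forcing $D_s(v)=0$ and $N_G(v) \cap U_i = \emptyset$ for every $i$; in other words, no neighbour of $v$ is ever fired on the way from $D_s$ to $D^*$.

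To finish, I invoke the strong separator structure. If $v$ has an $S$-neighbour $s$, then choosing that $s$ yields an immediate contradiction with $s \in U_1 \subseteq V(G)\setminus N_G(v)$. Otherwise I choose $s \in S$ at the end of a path $v = v_0, v_1, \ldots, v_p$ in $C$ with $v_p$ adjacent to $s$, and propagate the conclusion along the path: I claim that $v_j \notin U_i$ and $N_G(v_j) \cap U_i = \emptyset$ for every $i$ and every $0 \leq j \leq p$, by induction on $j$. The tree structure of $C$ ensures that the only $C$-neighbour of $v_j$ on the ``root side'' is $v_{j-1}$, and the strong separator condition restricts $v_j$'s contribution from $S$ to at most one edge per $s' \in S$; both facts are crucial for controlling the chip balance at $v_j$. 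Applied at $j=p$, the induction gives $s \in N_G(v_p) \cap U_1 = \emptyset$, the desired contradiction.

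The hard part is the inductive step. For $j \geq 1$ we do not know $D^*(v_j) = 0$, so the chip balance at $v_j$ by itself does not produce the conclusion in the same clean way as the base case: many chips could arrive at $v_j$ from firings of $U_i$-vertices not on the path, and ruling these out requires carefully partitioning the edge contributions using the tree structure of $C$ and the single-edge interface with $S$. A cleaner alternative, which I expect to be the route chosen in \cite{JosseDionTreewidth}, is to run Dhar's burning algorithm on $D^*$ starting at $v$ and trace the burning along the path $v_0,\ldots,v_p$ using the tree-plus-thin-interface geometry of $C \cup N_S(C)$; then the hypothesis ``$D$ reaches $s$ for every $s \in S$'' can be turned into a direct structural obstruction to $D^*(v) = 0$ without an explicit induction.
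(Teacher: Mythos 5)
The paper does not actually prove this statement --- it is imported verbatim from \cite[Lem.~2.6]{JosseDionTreewidth} --- so there is no in-paper argument to compare yours against; your proposal has to stand on its own, and it does not. The set-up is sound: reducing to showing that $D$ reaches each $v\in V(G)\setminus S$, passing to the $v$-reduced divisor $D^*$, and extracting from \autoref{prop:increasing-firing-sets} an increasing chain $U_1\subseteq\cdots\subseteq U_k$ from $D_s$ to $D^*$ with $s\in U_1$, $v\notin U_i$ and $N_G(v)\cap U_i=\emptyset$ is all correct (modulo the unaddressed degenerate case $D_s=D^*$, where the chain is empty, there is no $U_1$, and the contradiction must come from elsewhere). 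But the entire content of the theorem is concentrated in the inductive step that you explicitly leave open, and the induction as formulated cannot be closed by the chip-balance argument you set up. At $v_0=v$ the balance $0=D^*(v)=D_s(v)+\sum_i\bigl|E_G(v,U_i)\bigr|$ works only because $D^*(v)=0$ forces every nonnegative summand to vanish. At an interior vertex $v_j$ of the path you have no upper bound on $D^*(v_j)$, so the identity $D^*(v_j)-D_s(v_j)=\sum_i\bigl|E_G(v_j,U_i)\bigr|$ is perfectly consistent with $v_j$ having neighbours in some $U_i$: a neighbour $s'\in S\setminus\{s\}$, or a branch vertex of the tree $C$ hanging off $v_j$, may well have positive firing number and deliver chips to $v_j$ that simply sit there in $D^*$. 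Neither the tree structure of $C$ nor the one-edge-per-$s$ condition rules this out locally, so the claim that $N_G(v_j)\cap U_i=\emptyset$ for all $j$ is not established by the tools you have assembled.

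Your closing remark --- that one should instead run Dhar's burning algorithm on $(G,D^*,v)$ and exploit the tree-plus-thin-interface geometry of $C$ directly --- is indeed the style of argument that succeeds here (compare \autoref{lem:helper-lemma}), but a one-sentence gesture toward it is not a proof. As it stands, the proposal correctly isolates the difficulty and then stops exactly where the theorem begins.
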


The following corollary is immediate from either \autoref{thm:rank-determining-set} or \autoref{thm:strong-separator}.

\begin{corollary}
	\label{cor:discrete-rank-determining-set}
	Let $G$ be a loopless multigraph, and let $H$ be a subdivision of $G$. If $D \in \Div(H)$ reaches all vertices of $V(G)$, then $\rank(D) \geq 1$.
\end{corollary}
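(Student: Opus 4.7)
The plan is to apply \autoref{thm:strong-separator} with the set $S := V(G)$, viewed as a subset of $V(H)$. Almost all of the work is just verifying that $V(G)$ is a strong separator of $H$; once this is done, the hypothesis that $D$ reaches every vertex of $V(G)$ is exactly the hypothesis of \autoref{thm:strong-separator}, giving $\rank(D) \geq 1$ immediately.

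To verify the strong-separator condition, I would first describe the connected components of the induced subgraph of $H$ on $V(H) \setminus V(G)$. Since $H$ is a subdivision of $G$, each edge $e = uv$ of $G$ (with $u \neq v$, as $G$ is loopless) is replaced in $H$ by a path whose endpoints are $u$ and $v$ and whose internal vertices are the subdivision vertices of $e$; distinct edges of $G$ (including parallel edges) contribute disjoint sets of internal vertices. Consequently each connected component $C$ of $V(H) \setminus V(G)$ is precisely the set of internal subdivision vertices of a single edge $e \in E(G)$, which is a (possibly empty) path, hence a tree.

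Next I would check the ``at most one edge'' condition. For a component $C$ arising from the edge $e = uv$ of $G$ and any $s \in V(G)$, an edge of $H$ from $s$ to $C$ can only exist when $s$ is an endpoint of $e$, in which case there is exactly one such edge, namely the first edge of the subdivided path. All other vertices of $V(G)$ have no edges to $C$. Thus $V(G)$ is a strong separator.

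The only mildly delicate point I expect is that parallel edges of $G$ between the same pair $u, v$ produce several distinct components of $V(H) \setminus V(G)$, one per parallel edge; but since the condition in the definition of strong separator is applied to each component separately, this causes no difficulty. (As an alternative route, one could instead use \autoref{thm:rank-determining-set}: the pair $(G,\ell)$, where $\ell(e)$ records the number of subdivision pieces of $e$, is a loopless model of the metric graph $\Gamma(H)$, so $V(G)$ is rank-determining in $\Gamma(H)$; combined with $\rank_H = \rank_{\Gamma(H)}$ from \cite{Hladky-Kral-Norine}, this yields the same conclusion.)
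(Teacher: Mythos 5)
Your proof is correct and takes essentially the same approach as the paper, which states that the corollary is immediate from either \autoref{thm:strong-separator} or \autoref{thm:rank-determining-set}; you have simply spelled out the routine verification that $V(G)$ is a strong separator of $H$, and your parenthetical alternative is exactly the paper's other suggested route.
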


\section{Equivalence of the two forms of \texorpdfstring{\autoref{conj:subdivision-metric}}{Conjecture \ref*{conj:subdivision-metric}}}
\label{sec:two-conjectures}

In this section, we prove \autoref{thm:two-conjectures} using a modification of the proof of \cite[Thm.~5.1]{JosseDionTreewidth}.
The main idea is the following: given a rank $r$ divisor $D$ on the metric graph $\Gamma(G)$, we will change the lengths of the edges between points in $V(G) \cup \supp(D)$ in such a way that $\supp(D)$ is moved to the $\Q$-points of the graph, all the while leaving the rank of $D$ and the distances between the vertices of $G$ unchanged.
We will now make this precise.

\begin{definition}
	Given a metric graph $\Gamma$ and a model $(G,\ell)$ of $\Gamma$, a \emph{$G$-rescaling of $\Gamma$} is a metric graph $\Gamma' := \Gamma(G,\ell')$, where $\ell' \in \R_{>0}^{E(G)}$ is another length vector.
	If $D \in \Div(\Gamma)$ with $\supp(D) \subseteq V(G)$, then $D$ defines a divisor $D' \in \Div(\Gamma')$ in the obvious way, which we call \emph{the $G$-rescaling of $D$}.
\end{definition}
We point out that $\Gamma$ and its $G$-rescaling $\Gamma'$ can be isometric even if $\ell \neq \ell'$. This is because vertices of degree $2$ can be moved around, as illustrated in \autoref{fig:isometric-rescaling}.
In that case the vertex set $V(G)$ is embedded into $\Gamma \cong \Gamma'$ in two different ways, and the divisor $D$ and its $G$-rescaling $D'$ could be different divisors on the same metric graph.
This will be the main tool in our proof of \autoref{thm:two-conjectures}.

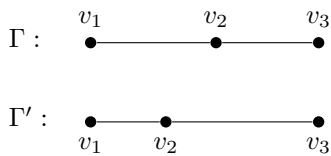
\begin{figure}[h!t]
	\centering
	\begin{tikzpicture}[scale=3]
		\node[vertex,label=above:$v_1$] (v1) at (0,0) {};
		\node[vertex,label=above:$v_2$] (v2) at (0.55,0) {};
		\node[vertex,label=above:$v_3$] (v3) at (1,0) {};
		\draw (v1) -- (v2) -- (v3);
		\node[anchor=base west] at (-.4,-.02) {$\Gamma:$};
		\begin{scope}[yshift=-3.5mm]
			\node[vertex,label=below:$v_1$] (w1) at (0,0) {};
			\node[vertex,label=below:$v_2$] (w2) at (0.33,0) {};
			\node[vertex,label=below:$v_3$] (w3) at (1,0) {};
			\draw (w1) -- (w2) -- (w3);
			\node[anchor=base west] at (-.4,-.02) {$\Gamma':$};
		\end{scope}
	\end{tikzpicture}
	\caption{A metric graph $\Gamma = \Gamma(G,\ell)$ and a rescaling $\Gamma' = \Gamma(G,\ell')$ with $\ell' \neq \ell$ such that $\Gamma$ and $\Gamma'$ are isometric.}
	\label{fig:isometric-rescaling}
\end{figure}

To rescale from real to rational edge lengths we use the following lemma.

\begin{lemma}
	\label{lem:rational-LP}
	Let $A \in \Q^{m \times n}$ and $b \in \Q^m$. If the linear system $Ax = b$ has a solution $x \in \R_{>0}^n$, then it also has a solution $x' \in \Q_{>0}^n$.
\end{lemma}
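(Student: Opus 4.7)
The plan is a density argument on the affine solution set. Let $S := \{x \in \R^n \mid Ax = b\}$. By hypothesis, $S$ is non-empty (it contains the given positive real solution), so it is an affine subspace of $\R^n$. Since $A$ and $b$ are rational, ordinary Gaussian elimination over $\Q$ produces a particular solution $x_0 \in \Q^n$ with $Ax_0 = b$ and a rational basis $v_1, \ldots, v_k \in \Q^n$ of $\ker A$; hence
\[
    S = x_0 + \operatorname{span}_{\R}(v_1, \ldots, v_k).
\]

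First I would use this rational parametrisation to show that $S \cap \Q^n$ is dense in $S$ (in the subspace topology), which is immediate: given any $x = x_0 + \sum_i \lambda_i v_i \in S$ and any $\varepsilon > 0$, pick rationals $\lambda_i' \in \Q$ with $|\lambda_i - \lambda_i'|$ small enough, and the resulting point $x_0 + \sum_i \lambda_i' v_i$ lies in $S \cap \Q^n$ and is within $\varepsilon$ of $x$ (in, say, the sup-norm).

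Next I would observe that the positivity cone $\R_{>0}^n$ is open in $\R^n$, so $S \cap \R_{>0}^n$ is a non-empty open subset of $S$ (open in the subspace topology, non-empty because it contains the hypothesised real solution $x$). Combining this with the density statement from the previous step yields a point $x' \in S \cap \R_{>0}^n \cap \Q^n$, which is the desired rational positive solution.

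There is no real obstacle here; the only thing to be slightly careful about is the density step, where one must justify that rational linear combinations of the rational basis vectors of $\ker A$ are dense in the real span, which is just the density of $\Q$ in $\R$ applied coordinate by coordinate. The argument of course fails if one insists on keeping the same real solution; the point is that one perturbs it slightly, and the strict positivity is preserved precisely because it is an open condition.
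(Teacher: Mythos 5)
Your proof is correct and takes essentially the same approach as the paper: both arguments rationally parametrise the affine solution set (you via a rational particular solution plus a rational basis of $\ker A$, the paper via a rational affine basis), observe that rational points are therefore dense in it, and use the openness of $\R_{>0}^n$ to perturb the given positive real solution to a nearby positive rational one.
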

\begin{proof}
	Since the system has a solution $x \in \R_{>0}$, the solution space $\{z \mid Az = b\}$ is a non-empty affine $\Q$-subspace of $\R^n$. Choose an affine rational basis $y_0,\ldots,y_d \in \Q^n$ for the solution space and write $x = \alpha^{(0)} y_0 + \cdots + \alpha^{(d)} y_d$ with $\alpha^{(0)} + \cdots + \alpha^{(d)} = 1$. For every $i \in \{1,\ldots,d\}$, choose a rational sequence $\{\alpha_k^{(i)}\}_{k=1}^\infty$ such that $\lim_{k \to \infty} \alpha_k^{(i)} = \alpha^{(i)}$, and define $\alpha_k^{(0)} := 1 - \alpha_k^{(1)} - \cdots - \alpha_k^{(d)}$. Then $\lim_{k\to\infty} \alpha_k^{(0)}y_0 + \cdots + \alpha_k^{(d)}y_d = x$. Since $\R_{>0}^n$ is an open neighbourhood of $x$, there is a $K_0 \in \N$ such that $\alpha_k^{(0)}y_0 + \cdots + \alpha_k^{(d)}y_d \in \R_{>0}^n$ for all $k \geq K_0$. This gives a sequence of solutions in $\Q_{>0}^n$ converging to $x$.
\end{proof}

We now come to the main result of this section, which is an extension of \cite[Thm.~5.1]{JosseDionTreewidth}. The result of \myautoref{thm:rounding}{itm:rounding:real} was already implicit in the proof of \cite[Prop.~3.1]{Gathmann-Kerber}.

\begin{theorem}
	\label{thm:rounding}
	Let $\Gamma$ be a metric graph, and let $D \in \Div_+(\Gamma)$ be an effective divisor.
	\begin{enumerate}[label=(\alph*)]
		\item\label{itm:rounding:real} There exists a loopless model $(G,\ell)$ with $\supp(D) \subseteq V(G)$ and a rational length vector $\ell' \in \Q_{>0}^{E(G)}$ such that the $G$-rescaling $D'$ of $D$ in $\Gamma' := \Gamma(G,\ell')$ satisfies $\rank_{\Gamma'}(D') \geq \rank_{\Gamma}(D)$.
		
		\item\label{itm:rounding:rational} If $\Gamma$ is a metric $\Q$-graph, then the length vector $\ell'$ in \ref{itm:rounding:real} can be chosen in such a way that $\Gamma'$ is isometric to $\Gamma$.
	\end{enumerate}
\end{theorem}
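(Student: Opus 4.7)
The plan is to exploit the fact that divisor equivalences on a metric graph are witnessed by piecewise affine rational functions with integer slopes, so that the existence of such a function imposes only finitely many rational linear constraints on the edge-length vector. We can then apply \autoref{lem:rational-LP} to replace the real edge-length vector by a rational one close enough to preserve all of the equivalences needed for the rank bound.

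First I would choose a loopless model $(G_0,\ell_0)$ of $\Gamma$ with $\supp(D)\subseteq V(G_0)$, which is possible by subdividing. Set $r := \rank_\Gamma(D)$. Since $V(G_0)$ is rank-determining for $\Gamma$ by \autoref{thm:rank-determining-set}, for every effective divisor $E$ with $\supp(E)\subseteq V(G_0)$ and $\deg(E) = r$, I can pick a rational function $f_E$ and an effective divisor $F_E$ on $\Gamma$ with $\mdiv(f_E) = F_E - D + E$. There are only finitely many such $E$. By further subdividing $(G_0,\ell_0)$ to a loopless model $(G,\ell)$, I may assume that each $f_E$ is affine on every edge of $G$ and that $\supp(F_E) \subseteq V(G)$, while keeping $V(G_0) \subseteq V(G)$.

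Next I would set up the rational linear system. Orient each edge of $G$ and let $s_E(e)\in\Z$ be the slope of $f_E$ on $e$ in this orientation. For every cycle $C = (e_1,\ldots,e_k)$ of $G$ with traversal signs $\varepsilon_i = \pm 1$, continuity of $f_E$ forces
\[
\sum_{i=1}^{k}\varepsilon_i\,s_E(e_i)\,\ell(e_i) = 0.
\]
Taking one such equation for each cycle in a fundamental cycle basis of $G$ and each $E$ yields a finite integer linear system $Ax = 0$ having $\ell \in \R_{>0}^{E(G)}$ as a positive solution. \autoref{lem:rational-LP} then produces a positive rational solution $\ell' \in \Q_{>0}^{E(G)}$. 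The cycle conditions imply that the slope vector $s_E$ still defines a continuous rational function $f'_E$ on $\Gamma' := \Gamma(G,\ell')$, and since the coefficients of the principal divisor of a piecewise affine function depend only on its slopes (not on edge lengths), we get $\mdiv(f'_E) = F_E - D' + E$ on $\Gamma'$. Hence $D' - E \sim F_E$ on $\Gamma'$ for every effective $E$ of degree $r$ with $\supp(E) \subseteq V(G_0)$. Because $V(G_0)$ is rank-determining for $\Gamma'$ as well---it underlies the loopless model obtained from $G_0$ by setting the length of each edge equal to the sum of $\ell'$-lengths along its subdivided path in $G$---this gives $\rank_{\Gamma'}(D') \geq r$, proving (a).

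For (b), if $\Gamma$ is a metric $\Q$-graph then its minimal loopless model $(G^\circ,\ell^\circ)$ has $\ell^\circ \in \Q_{>0}^{E(G^\circ)}$, and $G$ is a subdivision of $G^\circ$. For each edge $e_0 \in E(G^\circ)$ corresponding to edges $e_1,\ldots,e_k$ in $G$, I would add the rational linear equation $\sum_{i=1}^{k} \ell'(e_i) = \ell^\circ(e_0)$ to the system above. The augmented system remains rational and still admits $\ell$ as a positive real solution, so \autoref{lem:rational-LP} delivers a rational $\ell' \in \Q_{>0}^{E(G)}$ that additionally respects these constraints, forcing $\Gamma(G,\ell')$ to be isometric to $\Gamma$. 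The main technical hurdle I foresee is verifying carefully that the cycle-consistency equations truly characterise the existence of the rational functions $f'_E$ on $\Gamma'$---especially in the presence of parallel edges in $G$ and the bookkeeping of orientations within the chosen cycle basis---but this is a standard check once the combinatorial setup is fixed.
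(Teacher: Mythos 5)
Your proposal is correct and follows essentially the same route as the paper: witness the rank by finitely many rational functions $f_E$ (one per effective divisor $E$ of degree $r$ supported on a finite rank-determining set), encode the admissible edge lengths as positive solutions of the integral cycle-consistency system, invoke \autoref{lem:rational-LP} to pass to a positive rational solution, and for part (b) append the rational path-length constraints so that $\Gamma'$ is isometric to $\Gamma$. The only (immaterial) deviations are that you restrict to a fundamental cycle basis and justify that $V(G_0)$ remains rank-determining in $\Gamma'$ by applying \autoref{thm:rank-determining-set} to the coarsened model, where the paper instead cites the homeomorphism invariance of rank-determining sets (\autoref{thm:rank-determining-homeomorphism}).
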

\begin{proof}
	\begin{enumerate}[label=(\alph*)]
		\item Write $r := \rank_\Gamma(D)$, and let $S \subseteq \Gamma$ be a finite rank-determining set.
		For every $E \in \Div_+^r(S)$, choose a divisor $D_E \in \Div(\Gamma)$ and a rational function $f_E\colon \Gamma \to \R$ such that $D_E \geq E$ and $D_E = D + \mdiv(f_E)$.
		Furthermore, choose a loopless model $(G,\ell)$ of $\Gamma$ such that
		\[ S \cup \supp(D) \cup \bigcup_{E \in \Div_+^r(S)} \supp(D_E) \, \subseteq \, V(G). \]
		Since $D,D_E \geq 0$ and $D_E - D = \mdiv(f_E)$, we have $\supp(\mdiv(f_E)) \subseteq \supp(D) \cup \supp(D_E) \subseteq V(G)$, so $V(G)$ contains all points of non-linearity of $f_E$, for every $E \in \Div_+^r(S)$.
		
		Choose an orientation of the edges of $G$. For every cycle $C$ in $G$, choose a circular orientation of the edges of $C$, and define $\chi_C\colon E(G) \to \{-1,0,1\}$ by setting
		\[ \chi_C(e) := \begin{cases}
			1,&\quad\text{if $e \in E(C)$ and the orientations of $G$ and $C$ agree on $e$};\\[1ex]
			-1,&\quad\text{if $e \in E(C)$ and the orientations of $G$ and $C$ disagree on $e$};\\[1ex]
			0,&\quad\text{if $e \notin E(C)$}.
		\end{cases} \]
		For $E \in \Div_+^r(S)$ and $e \in E(G)$, let $\phi(f_E,e) \in \Z$ denote the slope of $f_E$ on $e$, in the forward direction of $e$.
		Note that a $G$-rescaling $\Gamma(G,\ell')$ of $\Gamma$ admits rational functions $f_E'$ whose slope on $e$ equals $\phi(f_E,e)$, for all $E \in \Div_+^r(S)$ and all $e\in E(G)$, if and only if $y = \ell'$ is a solution to following system of equations:
		\begin{equation}
			\sum_{e\in E(G)} \phi(f_E,e)\chi_C(e)\, y(e) = 0,\ \text{for every cycle $C$ and every $E \in \Div_+^r(S)$}. \label{eqn:rounding:linear-system}
		\end{equation}
		Since the coefficients (that is, $\phi(f_E,e)\chi_C(e)$) and constants (that is, $0$) of this linear system are integral, and since $y = \ell \in \R_{>0}^{E(G)}$ is a solution, it follows from \autoref{lem:rational-LP} that there exists a solution $\ell' \in \Q_{>0}^{E(G)}$.

		Consider the $G$-rescaling $\Gamma' := \Gamma(G,\ell')$. Let $D'$ be the corresponding $G$-rescaling of $D$, and let $D_E'$ be the $G$-rescaling of $D_E$ for all $E \in \Div_+^r(S)$.
		By the above, we may choose rational functions $f_E'$ on $\Gamma'$ such that the slope of $f_E'$ on $e$ equals $\phi(f_E,e)$, for all $E \in \Div_+^r(S)$ and all $e \in E(G)$.
		Then clearly $D_E' = D' + \mdiv(f_E')$, so the $D_E'$ are equivalent to $D'$.
		Since $D_E' \geq E$ for every $E \in \Div_+^r(S)$, it follows that $\rank_S(D') \geq r$.
		By \autoref{thm:rank-determining-homeomorphism}, $S$ is rank-determining in $\Gamma'$, so $\rank_{\Gamma'}(D') = \rank_S(D') \geq r$.
		
		\item Choose a rational model $(\tilde G,\tilde\ell)$ of $\Gamma$.
		We repeat the argument of \ref{itm:rounding:real} with the following modifications.
		First, we add the requirement that $V(\tilde G) \subseteq V(G)$.
		Then every edge $\tilde e$ in $\tilde G$ corresponds to a path in $G$, which we denote $P_{\tilde e}$.
		Second, we extend the linear system from \eqref{eqn:rounding:linear-system} by adding the following constraints:
		\begin{equation}
			\sum_{e \in E(P_{\tilde e})} y(e) = \tilde \ell(\tilde e),\quad\text{for all $\tilde e \in E(\tilde G)$}. \label{eqn:rounding:linear-system-ii}
		\end{equation}
		Again, the coefficients and constants of the linear system are rational, and $y = \ell \in \R_{>0}^{E(G)}$ is a solution, so it follows from \autoref{lem:rational-LP} that there is a solution $\ell' \in \Q_{>0}^{E(G)}$. The rest of the proof of \ref{itm:rounding:real} carries through unchanged, and the extra constraints from \eqref{eqn:rounding:linear-system-ii} ensure that $\Gamma'$ is isometric to $\Gamma$.
		\qedhere
	\end{enumerate}
\end{proof}

\begin{proof}[{Proof of \autoref{thm:two-conjectures}}]
	Every rank $r$ divisor on $\sigma_k(G)$ also defines a rank $r$ divisor on $\Gamma(\sigma_k(G),\one/k) = \Gamma(G,\one)$. Therefore $\dgonr{\Gamma(G)}{r} \leq \min_{k\in\N_1} \dgonr{\sigma_k(G)}{r}$.
	
	Conversely, let $D \in \Div_+(\Gamma(G))$ be an effective divisor of rank $r$. By \myautoref{thm:rounding}{itm:rounding:rational}, there exists a divisor $D' \in \Div_+(\Gamma(G))$ with $\deg(D') = \deg(D)$ and $\rank(D') \geq \rank(D)$ which is supported on the $\Q$-points of $\Gamma(G)$. Then $D'$ is supported on the vertices of the model $(\sigma_k(G),\one/k)$ of $\Gamma(G)$ for some $k \in \N_1$, so we have $\dgonr{\sigma_k(G)}{r} \leq \dgonr{\Gamma(G)}{r}$.
\end{proof}

\begin{remark}
	Analogously to the proof of the main result of \cite{Bodlaender-vdWegen-vdZanden}, the linear system from the proof of \myautoref{thm:rounding}{itm:rounding:rational} forms a certificate that $\dgonr{\Gamma}{r} \leq d$.
	If $r$ and $d$ are fixed, then this certificate has size polynomial in the size of $\Gamma$, so it follows that \textsc{Metric Divisorial $r$-Gonality} for $\Q$-graphs belongs to the complexity class NP.
	(For details, refer to the proof in \cite{Bodlaender-vdWegen-vdZanden}.)
	Moreover, it can be deduced from the proof of \cite[Thm.~3.5]{Gijswijt-Smit-vdWegen} that this problem is also NP-hard for $r = 1$.
	We suspect that the same holds for all $r \geq 1$.
\end{remark}

\begin{remark}
	The proof of \myautoref{thm:rounding}{itm:rounding:rational} can also be used to find an upper bound on the size of the subdivision needed to get equality in \autoref{thm:two-conjectures}.
	One such upper bound can be obtained by following the proof of \cite[Cor.~6.2]{Bodlaender-vdWegen-vdZanden}.
	We sketch a way to improve this bound.
	Let $\tilde G$ be a graph with $n$ vertices and $m$ edges, let $\Gamma := \Gamma(\tilde G)$ be the corresponding unit metric graph, and let $D \in \Div(\Gamma)$ be a divisor of degree $d$ and rank $r$.
	We repeat the proof of \myautoref{thm:rounding}{itm:rounding:rational} with respect to the rational model $(\tilde G,\one)$ and the rank-determining set $S := V(\tilde G)$ (use \autoref{thm:rank-determining-set}).
	Without loss of generality, we may assume that $D$ is equal to one of the $D_E$.
	Then the number of variables of the linear system is $|E(G)| \leq m + dn^r$.
	
	Note that we can also allow a solution $\ell' \geq 0$ instead of $\ell' > 0$. This has the effect of contracting some of the edges of the model $G$ from the proof of \autoref{thm:rounding}, but the equations from \eqref{eqn:rounding:linear-system-ii} ensure that the resulting graph $\Gamma'$ is still isometric to $\Gamma$.
	Hence \eqref{eqn:rounding:linear-system} and \eqref{eqn:rounding:linear-system-ii} determine a linear program $Ax = b$, $x \geq 0$, and the entries of $A$ are integers which can be shown to be bounded in absolute value by $d$.
	The set of feasible solutions is non-empty and bounded by \eqref{eqn:rounding:linear-system-ii}, so there is a basic feasible solution $x$ (see e.g.~\cite[Thm.~4.2.3]{Matousek}).
	Hence there is a subset $B \subseteq \{1,\ldots,|E(G)|\}$ such that $x_B^{} = A_B^{-1}b$ and $x_{\compl{B}}^{} = 0$.
	Therefore the lowest common denominator of the entries of $x$ is at most $|\det(A_B)| \leq \sum_{\sigma \in \Sym{B}} \prod_{i \in B} |A_{i\sigma(i)}| \leq |B|!\cdot d^{|B|}$.
	In conclusion, if the unit metric graph $\Gamma = \Gamma(\tilde G,\one)$ has a divisor of rank $r$ and degree $d$, then so does $\sigma_k(\tilde G)$ for some $k \leq (m + dn^r)! \cdot d^{m + dn^r}$.
\end{remark}

\section{A graph \texorpdfstring{$G$}{G} such that \texorpdfstring{$\dgon(\sigma_2(G)) < \dgon(G)$}{dgon(σ\textunderscore{}2(G)) < dgon(G)}}
\label{sec:main-counterexample}

In this section we construct a class of graphs, which we call ``tricycle graphs''. We show that the divisorial gonality of any tricycle graph $G$ is strictly greater than the divisorial gonality of its $2$-subdivision $\sigma_2(G)$, and thus of its associated metric graph $\Gamma(G)$.

\begin{definition}
	\label{def:tricycle}
	A \emph{tricycle graph} is a multigraph $G$ that can be obtained in the following way:
	\begin{itemize}
		\item Start with three disjoint cycles $C_1$, $C_2$, $C_3$, each on at least $2$ vertices (a cycle on $2$ vertices consists of two vertices connected by two parallel edges).
		\item Choose two distinct vertices on each of these cycles, say $\vv{i}{-},\vv{i}{+} \in V(C_i)$.
		\item Add the edges $\vv{1}{+}\vv{2}{-}$, $\vv{2}{+}\vv{3}{-}$ and $\vv{3}{+}\vv{1}{-}$ to the graph.
		\item Add another vertex $v_0$ and six new edges to $G$, connecting $v_0$ to the six vertices $\vv{1}{-},\vv{1}{+},\vv{2}{-},\vv{2}{+},\vv{3}{-},\vv{3}{+}$.
		\item Subdivide the six edges incident with $v_0$.
	\end{itemize}
	We call the vertices $\vv{i}{-},\vv{i}{+} \in V(C_i)$ the \emph{transition vertices}, the edges $\vv{1}{+}\vv{2}{-}$, $\vv{2}{+}\vv{3}{-}$ and $\vv{3}{+}\vv{1}{-}$ the \emph{transition edges}, and $v_0$ the \emph{central vertex}.
	The \emph{outer ring} is the union of the cycles $C_1,C_2,C_3$ and the transition edges.
\end{definition}

\autoref{fig:tricycle} illustrates an example of a tricycle graph, along with the minimal tricycle $\Tm$ and the minimal simple tricycle $\Tms$.
Note that a multigraph (resp.~simple graph) $G$ is a tricycle if and only if $G$ can be obtained by taking a subdivision of the minimal tricycle $\Tm$ (resp.~the minimal simple tricycle $\Tms$) in such a way that the transition edges are not subdivided.

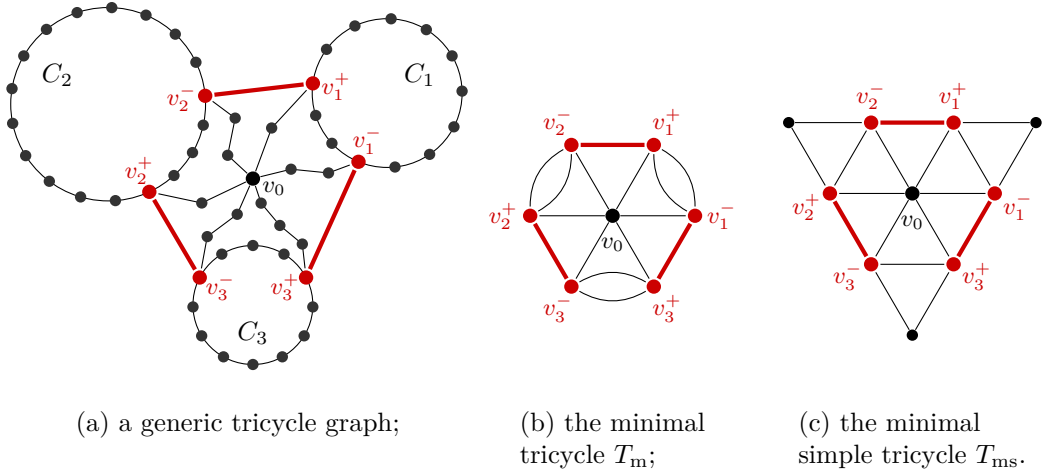
\begin{figure}[h!bt]
	\centering
	\def\cykelhoekA{75}
	\def\cykelafstA{23}
	\def\cykelradiusA{10}
	\def\cykelhoekB{70}
	\def\cykelafstB{20}
	\def\cykelradiusB{13}
	\def\cykelhoekC{125}
	\def\cykelafstC{17}
	\def\cykelradiusC{8}
	\pgfmathsetmacro{\curschaal}{\schaal * 0.58}
	\begin{tikzpicture}[scale=\curschaal,gray_vertex/.style={vertex,black!80},transition/.style={red!80!black,ultra thick,larger},larger/.style={inner sep=1.9pt}]
		\begin{scope}
			\coordinate (C1) at (30:\cykelafstA mm);
			\coordinate (C2) at (150:\cykelafstB mm);
			\draw (270:\cykelafstC mm) ++(.2,0) coordinate (C3);
			\draw (C1) circle (\cykelradiusA mm);
			\draw (C2) circle (\cykelradiusB mm);
			\draw (C3) circle (\cykelradiusC mm);
			\pgfmathsetmacro{\hoekA}{210 + \cykelhoekA / 2}
			\pgfmathsetmacro{\hoekB}{210 - \cykelhoekA / 2}
			\pgfmathsetmacro{\hoekC}{-30 + \cykelhoekB / 2}
			\pgfmathsetmacro{\hoekD}{-30 - \cykelhoekB / 2}
			\pgfmathsetmacro{\hoekE}{90 + \cykelhoekC / 2}
			\pgfmathsetmacro{\hoekF}{90 - \cykelhoekC / 2}
			\node[vertex,larger] (v0) at (.2,0) {};
			\draw (v0) ++(-25:.31) node {\small$v_0$};
			\begin{scope}[shift=(C1)]
				\node[vertex,transition] (v1) at (\hoekA:\cykelradiusA mm) {};
				\node[vertex,transition] (v2) at (\hoekB:\cykelradiusA mm) {};
				\pgfmathsetmacro{\labelradiusA}{\cykelradiusA - 3.14}
				\node[transition] at (\hoekA:\labelradiusA mm) {\small$\vv{1}{-}$};
				\node[transition] at (\hoekB:\labelradiusA mm) {\small$\vv{1}{+}$};
				\def\aantalkort{2}
				\foreach \x in {1,...,\aantalkort} {
					\pgfmathsetmacro{\curhoek}{\hoekB + \x * (\hoekA - \hoekB) / (\aantalkort + 1)}
					\node[gray_vertex] at (\curhoek:\cykelradiusA mm) {};
				}
				\def\aantallang{9}
				\foreach \x in {1,...,\aantallang} {
					\pgfmathsetmacro{\curhoek}{\hoekA + \x * (\hoekB - \hoekA + 360) / (\aantallang + 1)}
					\node[gray_vertex] at (\curhoek:\cykelradiusA mm) {};
				}
			\end{scope}
			\begin{scope}[shift=(C2)]
				\node[vertex,transition] (v3) at (\hoekC:\cykelradiusB mm) {};
				\node[vertex,transition] (v4) at (\hoekD:\cykelradiusB mm) {};
				\pgfmathsetmacro{\labelradiusB}{\cykelradiusB - 3.14}
				\node[transition] at (\hoekC:\labelradiusB mm) {\small$\vv{2}{-}$};
				\node[transition] at (\hoekD:\labelradiusB mm) {\small$\vv{2}{+}$};
				\def\aantalkort{3}
				\foreach \x in {1,...,\aantalkort} {
					\pgfmathsetmacro{\curhoek}{\hoekD + \x * (\hoekC - \hoekD) / (\aantalkort + 1)}
					\node[gray_vertex] at (\curhoek:\cykelradiusB mm) {};
				}
				\def\aantallang{13}
				\foreach \x in {1,...,\aantallang} {
					\pgfmathsetmacro{\curhoek}{\hoekC + \x * (\hoekD - \hoekC + 360) / (\aantallang + 1)}
					\node[gray_vertex] at (\curhoek:\cykelradiusB mm) {};
				}
			\end{scope}
			\begin{scope}[shift=(C3)]
				\node[vertex,transition] (v5) at (\hoekE:\cykelradiusC mm) {};
				\node[vertex,transition] (v6) at (\hoekF:\cykelradiusC mm) {};
				\pgfmathsetmacro{\labelradiusC}{\cykelradiusC - 3.14}
				\node[anchor=base,yshift=-2.2pt,transition] at (\hoekE:\labelradiusC mm) {\small$\vv{3}{-}$};
				\node[anchor=base,yshift=-2.2pt,transition] at (\hoekF:\labelradiusC mm) {\small$\vv{3}{+}$};
				\def\aantalkort{3}
				\foreach \x in {1,...,\aantalkort} {
					\pgfmathsetmacro{\curhoek}{\hoekF + \x * (\hoekE - \hoekF) / (\aantalkort + 1)}
					\node[gray_vertex] at (\curhoek:\cykelradiusC mm) {};
				}
				\def\aantallang{7}
				\foreach \x in {1,...,\aantallang} {
					\pgfmathsetmacro{\curhoek}{\hoekE + \x * (\hoekF - \hoekE + 360) / (\aantallang + 1)}
					\node[gray_vertex] at (\curhoek:\cykelradiusC mm) {};
				}
			\end{scope}
			\path (C1) to node {$C_1$} ++(30:\cykelradiusA mm);
			\path (C2) to node[pos=.6] {$C_2$} ++(150:\cykelradiusB mm);
			\path (C3) to node[pos=.47] {$C_3$} ++(270:\cykelradiusC mm);
			\draw[transition] (v2) -- (v3);
			\draw[transition] (v4) -- (v5);
			\draw[transition] (v6) -- (v1);
			\draw (v1) -- ++(-.4,-.15) node[gray_vertex] {} -- ++(-.5,.05) node[gray_vertex] {} -- (v0);
			\draw (v2) -- ++(-.55,-.6) node[gray_vertex] {} -- (v0);
			\draw (v3) -- ++(.4,-.3) node[gray_vertex] {} -- ++(-.1,-.5) node[gray_vertex] {} -- (v0);
			\draw (v4) -- ++(.7,-.15) node[gray_vertex] {} -- (v0);
			\draw (v5) -- ++(.1,.55) node[gray_vertex] {} -- ++(.4,.3) node[gray_vertex] {} -- (v0);
			\draw (v6) -- ++(-.05,.45) node[gray_vertex] {} -- ++(-.3,.25) node[gray_vertex] {} -- ++(-.25,.3) node[gray_vertex] {} -- (v0);
			\node[anchor=north] at (0,-3) {(a) a generic tricycle graph;};
		\end{scope}
		\begin{scope}[xshift=5cm]
			\begin{scope}[yshift=-5mm,scale=1.1]
				\node[vertex,larger,label={[label distance=2pt]below:{\small$v_0$}}] (v0) at (0,0) {};
				\node[vertex,transition] (v1) at (0:1cm) {};
				\node[vertex,transition] (v2) at (60:1cm) {};
				\node[vertex,transition] (v3) at (120:1cm) {};
				\node[vertex,transition] (v4) at (180:1cm) {};
				\node[vertex,transition] (v5) at (240:1cm) {};
				\node[vertex,transition] (v6) at (300:1cm) {};
				\node[transition] at (0:1.325cm) {\small$\vv{1}{-}$};
				\node[transition] at (60:1.325cm) {\small$\vv{1}{+}$};
				\node[transition] at (120:1.325cm) {\small$\vv{2}{-}$};
				\node[transition] at (180:1.325cm) {\small$\vv{2}{+}$};
				\node[anchor=base,yshift=-2.2pt,transition] at (240:1.325cm) {\small$\vv{3}{-}$};
				\node[anchor=base,yshift=-2.2pt,transition] at (300:1.325cm) {\small$\vv{3}{+}$};
				\foreach \x in {1,...,6} {
					\draw (v0) -- (v\x);
				}
				\draw[transition] (v2) -- (v3) (v4) -- (v5) (v6) -- (v1);
				\draw (v1) to[bend left] (v2);
				\draw (v1) to[bend right] (v2);
				\draw (v3) to[bend left] (v4);
				\draw (v3) to[bend right] (v4);
				\draw (v5) to[bend left] (v6);
				\draw (v5) to[bend right] (v6);
			\end{scope}
			\node[anchor=north,align=left] at (0,-3) {(b) the minimal\\tricycle $\Tm$;};
		\end{scope}
		\begin{scope}[xshift=9cm]
			\begin{scope}[yshift=-2mm,scale=1.1]
				\node[vertex,larger,label={[label distance=2pt]below:{\small$v_0$}}] (v0) at (0,0) {};
				\node[vertex,transition] (v1) at (0:1cm) {};
				\node[vertex,transition] (v2) at (60:1cm) {};
				\node[vertex,transition] (v3) at (120:1cm) {};
				\node[vertex,transition] (v4) at (180:1cm) {};
				\node[vertex,transition] (v5) at (240:1cm) {};
				\node[vertex,transition] (v6) at (300:1cm) {};
				\draw (v1) ++(-30:.325) node[transition] {\small$\vv{1}{-}$};
				\draw (v2) ++(90:.325) node[transition] {\small$\vv{1}{+}$};
				\draw (v3) ++(90:.325) node[transition] {\small$\vv{2}{-}$};
				\draw (v4) ++(210:.325) node[transition] {\small$\vv{2}{+}$};
				\draw (v5) ++(210:.325) node[anchor=base,yshift=-2.2pt,transition] {\small$\vv{3}{-}$};
				\draw (v6) ++(-30:.325) node[anchor=base,yshift=-2.2pt,transition] {\small$\vv{3}{+}$};
				\draw (v2) ++(1,0) node[vertex] (v7) {};
				\draw (v3) ++(-1,0) node[vertex] (v8) {};
				\draw (v5) ++(-60:1cm) node[vertex] (v9) {};
				\foreach \x in {1,...,6} {
					\draw (v0) -- (v\x);
				}
				\draw (v1) -- (v2) (v3) -- (v4) (v5) -- (v6);
				\draw[transition] (v2) -- (v3) (v4) -- (v5) (v6) -- (v1);
				\draw (v1) -- (v7) -- (v2);
				\draw (v3) -- (v8) -- (v4);
				\draw (v5) -- (v9) -- (v6);
			\end{scope}
			\node[anchor=north,align=left] at (0,-3) {(c) the minimal\\ simple tricycle $\Tms$.};
		\end{scope}
	\end{tikzpicture}
	\caption{A generic tricycle, the minimal tricycle, and the minimal simple tricycle, with the transition vertices and the transition edges highlighted for emphasis.}
	\label{fig:tricycle}
\end{figure}

In what follows, we will show that every tricycle graph $G$ satisfies $\dgon(G) = 6$ and $\dgon(\Gamma(G)) = \dgon(\sigma_2(G)) = 5$.
First of all, we exhibit a positive rank divisor of degree $5$ on $\sigma_2(G)$.

\begin{proposition}
	\label{prop:special-divisor}
	Let $G$ be a tricycle graph. Then $\dgon(\sigma_2(G)) \leq 5$.
\end{proposition}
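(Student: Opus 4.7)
The plan is to exhibit an explicit effective divisor of degree $5$ on $\sigma_2(G)$ whose rank is at least $1$. Exploiting the cyclic three-fold symmetry of the tricycle, I would take
\[
D := 2 v_0 + m_1 + m_2 + m_3,
\]
where $m_i$ denotes the midpoint in $\sigma_2(G)$ of the transition edge $\vv{i}{+}\vv{i+1}{-}$ (indices modulo $3$). By \autoref{cor:discrete-rank-determining-set}, it suffices to show that $D$ reaches every vertex of $V(G)$; and by rotational symmetry one only needs to handle the vertices associated to a single cycle, say $C_1$: namely $v_0$, the two arm-middle vertices $u_1^{\pm}$ (the subdivision points on the arms from $v_0$ to $\vv{1}{\pm}$), the transition vertices $\vv{1}{\pm}$, and the interior vertices of $C_1$. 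The vertex $v_0$ is in $\supp(D)$ and is trivially reached.

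The main tool is Dhar's burning algorithm. Started at $q = \vv{1}{+}$, the fire sweeps through $C_1$ and both arms attached to $\vv{1}{\pm}$, since every intermediate vertex of $\sigma_2(G)$ along those paths carries no chips, but is stopped at $m_1$ and $m_3$ (each with a single chip matching a single burnt neighbour) and at $v_0$ (whose two chips exactly absorb the two burnt edges from the innermost arm vertices). Firing the resulting maximal unburnt set---$v_0$, the three transition midpoints, and everything in the $C_2$- and $C_3$-sectors---yields an equivalent effective divisor $D'$ that places chips on $\vv{1}{+}$, $\vv{1}{-}$, $m_2$, and the two subdivision vertices of $\sigma_2(G)$ immediately adjacent to $v_0$ on the arms to $\vv{1}{\pm}$. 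In particular $D'$ reaches $\vv{1}{+}$; the arm-middle $u_1^+$ is then reached by applying one more valid firing to $D'$, namely of the complement of $\{u_1^+, b\}$, where $b$ is the new midpoint between $u_1^+$ and $\vv{1}{+}$.

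For an interior vertex $w$ of $C_1$, I would continue to iterate Dhar's algorithm from $q = w$: since $D'$ places chips at both gateways $\vv{1}{-}, \vv{1}{+}$ of $C_1$, each subsequent round should fire a valid set that shifts one chip one subdivision step closer to $w$ along each of the two arcs simultaneously, until two chips converge onto $w$. The hypothesis ``$\sigma_2$'' is essential here: the fresh midpoints of every cycle edge provide just enough intermediate slack for each of these shift-by-one firings to be valid (a single chip blocking a single burnt edge). The main obstacle I anticipate is precisely the bookkeeping for this last step, which must be carried out uniformly for cycles $C_1$ of arbitrary length along both arcs in parallel; the argument itself is routine chip-firing on a path and should cause no difficulty beyond that.
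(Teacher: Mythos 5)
Your divisor is exactly the one the paper uses, and your first Dhar run and firing (producing $D'$ with chips on $\vv{1}{-}$, $\vv{1}{+}$, $m_2$ and the two arm vertices adjacent to $v_0$) coincides with the paper's single firing of the complement of the component of $\sigma_2(G)\setminus\supp(D)$ containing $C_1$. The real difference is in how you invoke \autoref{cor:discrete-rank-determining-set}: you take the base graph to be $G$ itself, so you must reach \emph{every} vertex of $G$ --- the transition vertices, all the subdivision vertices on the six arms (of which a general tricycle may have arbitrarily many, not just one $u_i^{\pm}$ per arm), and every interior vertex of each cycle $C_i$. That is what forces the extra firing for $u_1^{+}$ and the iterated ``shift both chips one step toward $w$'' argument that you leave as bookkeeping. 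The paper instead observes that $\sigma_2(G)$ is also a subdivision of the minimal tricycle $\Tm$, whose vertex set consists only of $v_0$ and the six transition vertices; applying the corollary with $\Tm$ as the base graph, the one firing per cycle already finishes the proof. Your remaining steps do go through --- at each stage the burnt region is an arc of $C_1$ whose two boundary chips each cover exactly one cut edge, so the complement is valid and the two chips march toward $w$ from opposite sides until one lands on it --- so there is no gap, but choosing the coarsest admissible base graph for the rank-determining corollary eliminates all of that work.
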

\begin{proof}
	Let $D_0 \in \Div(\sigma_2(G))$ be the effective divisor with two chips on $v_0$ and one chip on the midpoint of each of the transition edges $\vv{1}{+}\vv{2}{-}$, $\vv{2}{+}\vv{3}{-}$, $\vv{3}{+}\vv{1}{-}$. Then $\deg(D_0) = 5$. In light of \autoref{cor:discrete-rank-determining-set}, in order to show that $D_0$ has positive rank, it suffices to prove that $D_0$ reaches $v_0$ and the transition vertices $\vv{1}{-},\vv{1}{+},\vv{2}{-},\vv{2}{+},\vv{3}{-},\vv{3}{+}$.
	
	Clearly $D_0$ reaches $v_0$, for we have $D_0(v_0) > 0$. Now fix $i \in \{1,2,3\}$. To reach the transition vertices $\vv{i}{-}$ and $\vv{i}{+}$, let $S_i \subseteq V(\sigma_2(G))$ be the connected component of $\sigma_2(G) \setminus \supp(D_0)$ that contains the cycle $C_i$. Then the subset $\compl{S_i}$ can be fired, and doing so yields an effective divisor $D_i$ with $D_i(\vv{i}{-}) = D_i(\vv{i}{+}) = 1$.
	It follows that $D_0$ is a positive rank divisor on $\sigma_2(G)$, hence $\dgon(\sigma_2(G)) \leq 5$.
\end{proof}

Evidently, the divisor from \autoref{prop:special-divisor} is not supported on vertices of $G$. The remainder of this section is dedicated to showing that $G$ has no positive rank divisors of degree $5$. Along the way, we also prove that $\dgon(\Gamma(G)) \geq 5$.

In \autoref{lem:graad-5} below, we show that every positive rank $v_0$-reduced divisor of degree at most $5$ on a subdivision of the minimal tricycle $\Tm$ must be of a very specific form.
This will subsequently be used to show that $\dgon(\Gamma(G)) = \dgon(\sigma_2(G)) = 5$ (see \autoref{cor:dgon-metric}) and $\dgon(G) = 6$ (see \autoref{thm:2-subdivision-counterexample}) for every tricycle graph $G$.

For convenience, we use the following notation.

\begin{definition}
	\label{def:subdivision-path}
	Let $G$ be a graph and let $H$ be a subdivision of $G$.
	For $e = uw \in E(G)$, let $\pathcc{u}{w}^e \subseteq H$ denote the path $uv_1v_2\cdots v_kw$ in $H$ corresponding to the subdivided edge $e$.
	Furthermore, let $\pathoo{u}{w}^e := \pathcc{u}{w}^e \setminus \{u,w\}$, $\pathco{u}{w}^e := \pathcc{u}{w}^e \setminus \{w\}$ and $\pathoc{u}{w}^e := \pathcc{u}{w}^e \setminus \{u\}$ denote the corresponding open and half-open subpaths.
	If $e$ is the only edge between $u$ and $w$, then we omit the superscript and simply write $\pathcc{u}{w}$, $\pathoo{u}{w}$, $\pathco{u}{w}$ and $\pathoc{u}{w}$.
\end{definition}

The following simple lemma is essential to our proof, and will be used repeatedly.

\begin{lemma}
	\label{lem:helper-lemma}
	Let $G$ be a graph and let $v_0 \in V(G)$.
	Let $e_1,\ldots,e_k \in V(G)$ be the edges incident with $v_0$, and let $v_i \in V(G) \setminus \{v_0\}$ be the other endpoint of $e_i$ for every $i$.
	Moreover, let $H$ be a subdivision of $G$, let $D \in \Div(H)$ be a positive rank $v_0$-reduced divisor on $H$, and let $w \in V(H)$ be a vertex with $D(w) = 0$.
	Then an execution of Dhar's burning algorithm on the triple $(H,D,w)$ has the following properties:
	\begin{enumerate}[label=(\alph*)]
		\item\label{itm:helper:v0} $v_0$ is not burned;
		\item\label{itm:helper:neighbours} If $I \subseteq \{i\in [k] \, : \, \text{$v_i$ is burned}\}$, then $\bigcup_{i\in I} \pathco{v_0}{v_i}^{e_i}$ contains at least $|I|$ chips.
	\end{enumerate}
\end{lemma}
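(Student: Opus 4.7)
The plan is to apply Dhar's burning algorithm to the triple $(H, D, w)$ and let $A$ denote the returned set of unburned vertices, which is the maximal valid subset of $V(H) \setminus \{w\}$ with respect to $D$. Both parts will be read off from Dhar's local condition (``chips $\geq$ number of burned incident edges'') combined with $v_0$-reducedness.

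For part (a), the key step is to argue $A \neq \emptyset$. If $A$ were empty then $D$ would already be $w$-reduced; but $D$ has positive rank, so $D$ reaches $w$, meaning its $w$-reduced form $D_w$ satisfies $D_w(w) \geq 1$, which forces $D \neq D_w$ since $D(w) = 0$. Hence $A$ is a nonempty valid subset of $V(H) \setminus \{w\}$ with respect to $D$, and since $D$ is $v_0$-reduced (every nonempty valid set must contain $v_0$), we conclude $v_0 \in A$, i.e.\ $v_0$ is unburned.

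For part (b), fix $i \in I$ and walk along $\pathcc{v_0}{v_i}^{e_i}$ from $v_0$ (unburned, by (a)) to $v_i$ (burned by the assumption on $I$); let $u^{(i)}$ be the last unburned vertex on this walk, and partition $I = I_0 \sqcup I_1$ with $I_0 := \{i \in I : u^{(i)} = v_0\}$. For $i \in I_1$, the vertex $u^{(i)}$ lies in $\pathoo{v_0}{v_i}^{e_i}$ and hence has degree $2$ in $H$ with both incident edges on this path; since the edge towards $v_i$ is burned, Dhar's condition forces $D(u^{(i)}) \geq 1$, and these vertices are pairwise distinct because the open subpaths $\pathoo{v_0}{v_i}^{e_i}$ are pairwise disjoint. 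For $i \in I_0$, the first edge of $\pathcc{v_0}{v_i}^{e_i}$ incident to $v_0$ is burned; these $|I_0|$ edges at $v_0$ are distinct, and Dhar's condition at the unburned vertex $v_0$ yields $D(v_0) \geq |I_0|$. Summing, the set $\bigcup_{i\in I} \pathco{v_0}{v_i}^{e_i}$ carries at least $D(v_0) + \sum_{i \in I_1} D(u^{(i)}) \geq |I_0| + |I_1| = |I|$ chips.

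The only delicate point is the nonemptiness of $A$ in (a): this is where the positive-rank hypothesis is essential, combined with $D(w) = 0$ (without either, the $v_0$-reducedness argument gives nothing). Once that is in place, part (b) reduces to a straightforward double counting on Dhar's local inequality, with the split $I = I_0 \sqcup I_1$ doing the bookkeeping between the case where the burning boundary lies on an interior subdivision vertex and the case where it already occurs at $v_0$.
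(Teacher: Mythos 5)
Your proof is correct and follows essentially the same route as the paper's: part (a) via non-emptiness of the returned valid set plus $v_0$-reducedness, and part (b) via the partition $I = I_0 \sqcup I_1$ according to whether the burning reaches all the way to $v_0$ along $\pathoc{v_0}{v_i}^{e_i}$, counting $|I_0|$ chips at $v_0$ and one chip on each $\pathoo{v_0}{v_i}^{e_i}$ for $i \in I_1$. Your explicit identification of the last unburned vertex $u^{(i)}$ just makes the paper's ``there must be at least one chip on $\pathoo{v_0}{v_i}^{e_i}$'' step more concrete.
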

\begin{proof}
	\begin{enumerate}[label=(\alph*)]
		\item Since $D$ has positive rank and $D(w) = 0$, the divisor $D$ cannot be $w$-reduced, so Dhar's algorithm returns a non-empty subset $A \subseteq V(G)$ that can be fired. Since $D$ is $v_0$-reduced, we must have $v_0 \in A$, which means that $v_0$ is not burned.
		
		\item Partition $I$ as $I = I_0 \cup I_1$, where $i \in I_0$ if all vertices of the path $\pathoc{v_0}{v_i}^{e_i}$ are burned, and $i \in I_1$ otherwise.
		Since $v_0$ is not burned, it has at most $D(v_0)$ burning neighbours, so $|I_0| \leq D(v_0)$.
		Moreover, if $i\in I_1$, then $v_i$ is burned, but not all vertices of the path $\pathoc{v_0}{v_i}^{e_i}$ are burned, so there must be at least one chip on $\pathoo{v_0}{v_i}^{e_i}$.
		The conclusion follows.
		\qedhere
	\end{enumerate}
\end{proof}

We will apply \autoref{lem:helper-lemma} to an arbitrary subdivision of the minimal tricycle $\Tm$.
For this we use the following terminology.
Using notation from \autoref{def:subdivision-path}, if $H$ is a subdivision of $\Tm$, then the three transition edges $\vv{1}{+}\vv{2}{-}$, $\vv{2}{+}\vv{3}{-}$, $\vv{3}{+}\vv{1}{-}$ of $\Tm$ correspond to the paths $\pathcc{\vv{1}{+}}{\vv{2}{-}}$, $\pathcc{\vv{2}{+}}{\vv{3}{-}}$, $\pathcc{\vv{3}{+}}{\vv{1}{-}}$ in $H$, which we call the \emph{transition paths}.
The \emph{transition vertices} of $H$ are the images in $H$ of the original six transition vertices $\vv{1}{-},\vv{1}{+},\vv{2}{-},\vv{2}{+},\vv{3}{-},\vv{3}{+}$ of $\Tm$, or in other words, the endpoints of the transition paths in $H$.
(This is consistent with our definition of the transition vertices of a tricycle graph, which can also be seen as a subdivision of $\Tm$.)

\begin{lemma}
	\label{lem:graad-5}
	Let $H$ be a subdivision of the minimal tricycle $\Tm$. If $D \in \Div(H)$ is a positive rank $v_0$-reduced divisor with $\deg(D) \leq 5$, then $D$ must have two chips on $v_0$ and exactly one chip on each of the transition paths $\pathcc{\vv{1}{+}}{\vv{2}{-}}$, $\pathcc{\vv{2}{+}}{\vv{3}{-}}$, $\pathcc{\vv{3}{+}}{\vv{1}{-}}$.
\end{lemma}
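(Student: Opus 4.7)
Since $\deg(D) \leq 5 = 2 + 1 + 1 + 1$, the conclusion is equivalent to the two inequalities $D(v_0) \geq 2$ and $\deg(D|_{\pathcc{\vv{i}{+}}{\vv{(i+1)}{-}}}) \geq 1$ for $i \in \{1,2,3\}$ (indices mod $3$): once these are established, the degree bound forces equality, and in particular no chip can lie outside $\{v_0\}$ and the three transition paths. I will prove each inequality by contradiction, running Dhar's burning algorithm from a vertex with no chip and invoking \autoref{lem:helper-lemma}.

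\textbf{Step 1 (transition-path bound).} Assume $\pathcc{\vv{1}{+}}{\vv{2}{-}}$ carries no chips (other indices being symmetric). Then $D(\vv{1}{+}) = 0$, so Dhar's burning from $w := \vv{1}{+}$ sweeps through the chipless transition path and also ignites $\vv{2}{-}$. Let $B$ denote the resulting set of burning transition vertices, so $\{\vv{1}{+}, \vv{2}{-}\} \subseteq B$. On pairwise disjoint regions of $H$ I then accumulate four lower bounds on chip counts: (i) $D(v_0) \geq 1$, since $D$ is $v_0$-reduced and positive-rank; (ii) $\sum_{\vv{j} \in B} D(\pathco{v_0}{\vv{j}}) \geq |B|$, by \myautoref{lem:helper-lemma}{itm:helper:neighbours}; (iii) for each cycle $C_i$ with exactly one of $\vv{i}{\pm}$ in $B$, a direct analysis of Dhar's along the two parallel arcs of $C_i$ forces its interior plus the non-burning endpoint to carry $\geq 2$ chips; (iv) for each transition path with exactly one endpoint in $B$, the analogous single-path analysis forces $\geq 1$ chip on its interior plus its non-burning endpoint. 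Enumerating the possible $B$, one finds that these four terms always sum to $\geq 7$: for instance, the extreme case $B = \{\vv{1}{+}, \vv{2}{-}\}$ itself gives $1 + 2 + 2 + 2 = 7$, and adding further burning transition vertices only strengthens the helper bound faster than it relaxes the blocking bounds. This contradicts $\deg(D) \leq 5$.

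\textbf{Step 2 (central-vertex bound).} Assume for contradiction that $D(v_0) = 1$. Step 1 shows that the three transition paths collectively carry at least $3$ chips; combined with $\deg(D) \leq 5$, this means those paths carry at most $4$ chips, and since they share the $6$ transition vertices as endpoints, at least one transition vertex $\vv{*}$ has $D(\vv{*}) = 0$. Run Dhar's from $\vv{*}$ and form $B \ni \vv{*}$. The helper and blocking package from Step 1 applies verbatim, supplemented now by the Step-1 inequality applied to each transition path whose two endpoints both lie outside $B$ (each contributing an additional chip on its interior or at one of its endpoints). A short case analysis on the propagation pattern---whether the fire escapes $\vv{*}$ via its cycle partner, via its transition neighbour, or remains confined to $\vv{*}$---shows that the total is always at least $6$, again contradicting $\deg(D) \leq 5$.

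The principal obstacle in both steps is verifying disjointness: the helper bound charges chips on the paths $\pathco{v_0}{\vv{j}}$ only for $\vv{j} \in B$, while the cycle and transition blocking bounds charge chips on cycle/transition-path interiors and on transition vertices only when those vertices lie outside $B$. Once this disjointness is confirmed, the bounds sum to a valid lower bound on $\deg(D)$, and combining the resulting $D(v_0) \geq 2$ with $\deg(D|_{\pathcc{\vv{i}{+}}{\vv{(i+1)}{-}}}) \geq 1$ and $\deg(D) \leq 5$ produces the exact chip distribution claimed.
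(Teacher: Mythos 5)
Your high-level strategy---run Dhar's algorithm from a chipless vertex, then sum chip lower bounds over several regions using \myautoref{lem:helper-lemma}{itm:helper:neighbours} for the spokes and blocking arguments for the cycles and transition paths---is the same as the paper's, but your accounting has a genuine gap: the regions you charge are \emph{not} pairwise disjoint. Already in Step 1, your bound (i) overlaps with your bound (ii): every half-open path $\pathco{v_0}{\vv{j}{\pm}}$ contains $v_0$, and the proof of \myautoref{lem:helper-lemma}{itm:helper:neighbours} may place all of its guaranteed chips on $v_0$ itself, so the ``extreme case'' total $1+2+2+2=7$ double-counts a chip at $v_0$; the honest total for $B=\{\vv{1}{+},\vv{2}{-}\}$ is $6$. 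That still exceeds $5$, so Step 1 is salvageable (though you would also need to actually carry out the enumeration over realizable sets $B$ rather than assert the monotonicity claim). The paper sidesteps this entirely by counting ``chips plus burned transition vertices'' on the two disjoint subgraphs $C_2\cup\pathcc{\vv{2}{+}}{\vv{3}{-}}$ and $C_1\cup\pathcc{\vv{3}{+}}{\vv{1}{-}}$ of the outer ring, and only then converting burned transition vertices into spoke chips, which live off the outer ring.

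In Step 2 the overlap is fatal. Your cycle-blocking region for $C_i$ and your Step-1 region for a transition path can share a transition vertex, and a single pile of chips there satisfies both bounds at once. Concretely, suppose $\vv{1}{-}$ is chipless and consider the configuration with one chip on $v_0$, two chips on $\vv{1}{+}$, and one chip on each of $\vv{2}{+}$ and $\vv{3}{+}$ (degree $5$, $D(v_0)=1$). Burning from $\vv{1}{-}$ can terminate with $B=\{\vv{1}{-}\}$: the chip on $v_0$ meets your helper bound, the two chips on $\vv{1}{+}$ meet your bound (iii) for $C_1$, the chip on $\vv{3}{+}$ meets your bound (iv) for $\pathcc{\vv{3}{+}}{\vv{1}{-}}$, and the Step-1 bounds for $\pathcc{\vv{1}{+}}{\vv{2}{-}}$ and $\pathcc{\vv{2}{+}}{\vv{3}{-}}$ are met by the chips on $\vv{1}{+}$ and $\vv{2}{+}$ --- but the chips on $\vv{1}{+}$ have now been counted twice. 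All of your lower bounds hold simultaneously with only $5$ chips, so no contradiction follows; your argument invokes no property of $D$ that excludes this configuration. The paper's Step 2 is genuinely different and avoids the issue: since $\deg(D)\leq 5$, some cycle $C_i$ carries at most one chip, so burning from a chipless vertex \emph{of that cycle} burns all of $C_i$ and in particular both of its transition vertices; \myautoref{lem:helper-lemma}{itm:helper:neighbours} then yields two chips on the two spokes of $C_i$ (disjoint from the outer ring). Repeating for a second low-chip cycle and using the global budget --- three chips already committed to the transition paths by Step 1 --- leaves only two chips for the four spokes, which forces both pairs to coincide at $v_0$. You would need to replace your Step 2 with an argument of this kind, or otherwise repair the disjointness of your charging scheme.
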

\begin{proof}
	First, we prove that there must be at least one chip on every transition path. Suppose, for the sake of contradiction, that one of the transition paths, say $\pathcc{\vv{1}{+}}{\vv{2}{-}}$, has no chips at all. We start an execution of Dhar's burning algorithm on $(H,D,\vv{1}{+})$.
	Let $H_2^+ \subseteq H$ be the union of the cycle $C_2$ and the transition path $\pathcc{\vv{2}{+}}{\vv{3}{-}}$.
	We claim that the number of chips on $H_2^+$ plus the number of burned transition vertices in $H_2^+$ is at least $3$.
	To that end, note first of all that $\vv{2}{-}$ is burned, since there is no chip on the transition path $\pathcc{\vv{1}{+}}{\vv{2}{-}}$.
	Now we distinguish three cases:
	\begin{itemize}
		\item If $\vv{2}{+}$ is not burned, then there must be at least two chips on $C_2$ to stop the fire spreading from $\vv{2}{-}$ to $\vv{2}{+}$. In this case, $H_2^+$ contains at least one burned transition vertex (namely $\vv{2}{-}$) and at least two chips, for a total of at least $3$.
		\item If $\vv{2}{+}$ is burned but $\vv{3}{-}$ is not burned, then there must be at least one chip on the half-open transition path $\pathoc{\vv{2}{+}}{\vv{3}{-}}$. In this case, $H_2^+$ contains two burned transition vertices ($\vv{2}{-}$ and $\vv{2}{+}$) and at least one chip, for a total of at least $3$;
		\item If both $\vv{2}{+}$ and $\vv{3}{-}$ are burned, then $H_2^+$ contains three burned transition vertices ($\vv{2}{-}$, $\vv{2}{+}$ and $\vv{3}{-}$).
	\end{itemize}
	Likewise, write $H_1^- := C_1 \cup \pathcc{\vv{1}{-}}{\vv{3}{+}}$. Analogously, the number of chips plus the number of burned transition vertices on $H_1^-$ is at least $3$.
	Since $H_2^+$ and $H_1^-$ are disjoint, the total number of chips on the outer ring plus the total number of burned transition vertices is at least $6$.
	But since the transition vertices are exactly the $\Tm$-neighbours of $v_0$, and since the half-open paths $\pathco{v_0}{\vv{i}{\pm}}$ are disjoint from the outer ring, it follows from \myautoref{lem:helper-lemma}{itm:helper:neighbours} that $\deg(D) \geq 6$, which is a contradiction. We conclude that every transition path must have at least one chip.
	
	Second, we prove that there must be two chips on $v_0$.
	Since the total number of chips is at most $5$, there must be a cycle $C_i$ on the outer ring with at most one chip. Choose $w \in V(C_i)$ with $D(w) = 0$ and start an execution of Dhar's burning algorithm on $(H,D,w)$. Since there is at most one chip on $C_i$, the entire cycle $C_i$ is burned. It follows from \myautoref{lem:helper-lemma}{itm:helper:neighbours} that there are at least two chips on $\pathco{v_0}{\vv{i}{-}} \cup \pathco{v_0}{\vv{i}{+}}$. Therefore the number of chips on the outer ring is at most $3$, so there must be another cycle $C_j$ ($j \neq i$) on the outer ring with at most one chip. By an analogous argument, there are at least two chips on $\pathco{v_0}{\vv{j}{+}} \cup \pathco{v_0}{\vv{j}{-}}$. But since the outer ring has at least $3$ chips (one on every transition path), there can be at most $2$ chips on $\pathco{v_0}{\vv{i}{-}} \cup \pathco{v_0}{\vv{i}{+}} \cup \pathco{v_0}{\vv{j}{-}} \cup \pathco{v_0}{\vv{j}{+}}$. The only way to meet these requirements is if there are exactly two chips on $v_0$.
	
	To conclude the proof, note that $2$ chips on $v_0$ and at least $1$ chip on every transition path add up to at least $5$ chips in total. Since $\deg(D) \leq 5$, all chips have been accounted for. In particular, there cannot be more than one chip on each of the transition paths.
\end{proof}

\autoref{lem:graad-5} shows that every positive rank $v_0$-reduced divisor $D$ with $\deg(D) \leq 5$ must in fact satisfy $\deg(D) = 5$, so the following corollary is immediate.

\begin{corollary}
	\label{cor:dgon-subdivision}
	Let $H$ be a subdivision of the minimal tricycle $\Tm$. Then $\dgon(H) \geq 5$.
\end{corollary}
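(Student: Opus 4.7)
The plan is to derive \autoref{cor:dgon-subdivision} as an immediate consequence of \autoref{lem:graad-5}, by contradiction. Assume towards a contradiction that $\dgon(H) \leq 4$, so that there exists a divisor $D \in \Div(H)$ of rank at least $1$ with $\deg(D) \leq 4$. I would first replace $D$ by the unique $v_0$-reduced divisor $D'$ equivalent to it (see \cite[Prop.~3.1]{BN-Riemann-Roch}). Passing to the $v_0$-reduced representative preserves both the degree and the rank, so $D'$ is a positive-rank $v_0$-reduced divisor on $H$ with $\deg(D') \leq 4 \leq 5$, and \autoref{lem:graad-5} applies.

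The lemma then forces $D'$ to have $D'(v_0) = 2$ together with at least one chip on each of the three transition paths $\pathcc{\vv{1}{+}}{\vv{2}{-}}$, $\pathcc{\vv{2}{+}}{\vv{3}{-}}$, $\pathcc{\vv{3}{+}}{\vv{1}{-}}$. These contributions already add up to $5$. Since $D'$ is $v_0$-reduced, all coefficients outside $v_0$ are non-negative, so nothing can cancel against them, and hence $\deg(D') \geq 5$, contradicting $\deg(D') \leq 4$. No step here requires new ideas: the entire content is already contained in \autoref{lem:graad-5}, and the only thing to verify is the harmless observation that one may reduce to the $v_0$-reduced representative without changing the degree or the rank. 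So I do not expect any real obstacle.
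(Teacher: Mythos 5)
Your argument is correct and is essentially the paper's own: the paper likewise observes that \autoref{lem:graad-5} forces every positive rank $v_0$-reduced divisor of degree at most $5$ to have degree exactly $5$, making the corollary immediate. Your version merely spells out the (standard) reduction to the $v_0$-reduced representative and the resulting contradiction with $\deg(D) \leq 4$.
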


\begin{corollary}
	\label{cor:dgon-metric}
	Let $G$ be a tricycle graph. Then $\dgon(\Gamma(G)) = \dgon(\sigma_2(G)) = 5$.
\end{corollary}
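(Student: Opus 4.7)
The plan is to combine the upper bound from \autoref{prop:special-divisor} with the lower bound from \autoref{cor:dgon-subdivision}, using \autoref{thm:two-conjectures} to pass between the metric and the subdivided discrete setting.

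First I would observe that since $G$ is a tricycle graph, by \autoref{def:tricycle} $G$ is itself a subdivision of the minimal tricycle $\Tm$ in which the three transition edges are not subdivided. Consequently, for every $k \geq 1$, the $k$-subdivision $\sigma_k(G)$ is also a subdivision of $\Tm$ (this time with the transition edges subdivided into $k$ parts). Hence \autoref{cor:dgon-subdivision} applies to every $\sigma_k(G)$ and yields $\dgon(\sigma_k(G)) \geq 5$ for every $k \geq 1$.

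Next, combining this lower bound with \autoref{prop:special-divisor}, which gives $\dgon(\sigma_2(G)) \leq 5$, immediately yields the equality $\dgon(\sigma_2(G)) = 5$.

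Finally, for the metric gonality, I would apply \autoref{thm:two-conjectures} in the case $r = 1$ to obtain
\[ \dgon(\Gamma(G)) \;=\; \min_{k \in \N_1} \dgon(\sigma_k(G)). \]
The right-hand side is at most $\dgon(\sigma_2(G)) = 5$, and it is at least $5$ by the uniform lower bound $\dgon(\sigma_k(G)) \geq 5$ established in the first paragraph. Therefore $\dgon(\Gamma(G)) = 5$ as well, completing the proof. There is no real obstacle here: all the work sits in \autoref{lem:graad-5} and \autoref{prop:special-divisor}; this corollary is just the bookkeeping step that packages those results together via \autoref{thm:two-conjectures}.
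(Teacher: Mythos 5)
Your proposal is correct and follows essentially the same route as the paper: the upper bound comes from \autoref{prop:special-divisor}, and the lower bound comes from observing that every $\sigma_k(G)$ is a subdivision of $\Tm$, so \autoref{cor:dgon-subdivision} combined with \autoref{thm:two-conjectures} gives $\dgon(\Gamma(G)) = \min_k \dgon(\sigma_k(G)) \geq 5$. No issues.
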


\begin{proof}
	It follows from \autoref{prop:special-divisor} that $\dgon(\Gamma(G)) \leq \dgon(\sigma_2(G)) \leq 5$. Furthermore, since every subdivision of $G$ is a also subdivision of the minimal tricycle $\Tm$, it follows from \autoref{cor:dgon-subdivision} and \autoref{thm:two-conjectures} that
	\[ \dgon(\Gamma(G)) = \min_{k\in\N_1} \dgon(\sigma_k(G)) \geq 5.\qedhere \]
\end{proof}

All that remains is to prove that every tricycle graph has divisorial gonality $6$. To do so, we once again use the preceding lemmas.

\begin{theorem}
	\label{thm:2-subdivision-counterexample}
	Every tricycle graph $G$ satisfies $\dgon(G) = 6$.
\end{theorem}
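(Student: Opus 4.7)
The approach is to prove $\dgon(G)\leq 6$ by exhibiting an explicit effective divisor of degree $6$ with positive rank, and to prove $\dgon(G)\geq 6$ by a case analysis of Dhar's burning algorithm on the divisors allowed by \autoref{lem:graad-5}.

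For the upper bound I would take the divisor $D = \vv{1}{-} + \vv{1}{+} + \vv{2}{-} + \vv{2}{+} + \vv{3}{-} + \vv{3}{+}$, with one chip on each transition vertex, so $\deg(D)=6$. By \autoref{cor:discrete-rank-determining-set} applied to $G$ viewed as a subdivision of $\Tm$, it suffices to show that $D$ reaches each of the seven vertices of $\Tm$. The six transition vertices already carry chips; to reach $v_0$ I would iteratively fire the outer ring, then the outer ring together with the first layer of spoke vertices, and so on, pushing the chips inward along the spokes. At each step every vertex in the firing set has exactly one edge leaving the set and (if that vertex carries chips) a chip to send across it, so each firing is valid; by \autoref{prop:increasing-firing-sets} this yields an equivalent effective divisor placing all six chips on $v_0$.

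For the lower bound I argue by contradiction: assume a positive-rank divisor $D$ on $G$ with $\deg(D)\leq 5$, and replace it by its $v_0$-reduced representative $D_0$. Since $G$ is itself a subdivision of $\Tm$, \autoref{lem:graad-5} forces $D_0 = 2v_0 + a_1 + a_2 + a_3$, where each $a_i$ is a chip on the $i$-th transition path; but the transition edges in a tricycle graph are by definition not subdivided, so each $a_i$ must in fact lie at a transition vertex. Writing $k_i$ for the number of chips of $D_0$ on the cycle $C_i$, we have $k_1+k_2+k_3 = 3$ and $k_i\leq 2$, so by the pigeonhole principle some cycle $C_j$ satisfies $k_j=1$. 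Let $a$ be the unique chip of $D_0$ on $C_j$ and let $w$ be the other transition vertex of $C_j$; then $D_0(w)=0$. It therefore suffices to show that $D_0$ is $w$-reduced, for then $D_0$ does not reach $w$, contradicting the positive rank of $D$.

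The main obstacle is verifying that Dhar's burning algorithm on $(G,D_0,w)$ burns every vertex. The plan is as follows. Fire from $w$ spreads freely along the chip-free portion of $C_j$ from both sides and arrives at $a$ with two burning edges, so $a$ (having only one chip) burns, and hence all of $C_j$ is burned. Because $k_j=1$, at least one of the two transition edges at $\vv{j}{-},\vv{j}{+}$ placed its chip on the $C_j$-side, so its other endpoint (a transition vertex on an adjacent cycle) is uncovered and also burns. Depending on the shape of $(k_1,k_2,k_3)$, the fire then either cascades through both neighbouring cycles by the same $2>1$ mechanism (the $(1,1,1)$ case, in which all six transition vertices burn directly) or chains only through the $0$-chip cycle (the case where $(k_1,k_2,k_3)$ is a permutation of $(2,1,0)$, in which four transition vertices burn). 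Either way, at least four spokes are fully burning, so $v_0$ with its two chips burns; fire then descends the remaining spokes (if any) to the still-unburned transition vertices, each of which now has one chip but two burning edges (the transition edge to an already-burned vertex plus the freshly arriving spoke), and hence burns. The final cycle's chip-free interior then burns trivially. This completes the contradiction and establishes $\dgon(G)\geq 6$.
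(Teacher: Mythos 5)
Your proof is correct in substance but takes a different route from the paper in both directions. For the upper bound, the paper simply observes that the six transition vertices form a strong separator and invokes \autoref{thm:strong-separator}, which makes the positive rank of $\vv{1}{-}+\vv{1}{+}+\vv{2}{-}+\vv{2}{+}+\vv{3}{-}+\vv{3}{+}$ immediate; your explicit ``peel the outer ring inward'' firing sequence also works, but note that once the chip travelling along the shortest spoke lands on $v_0$ the subsequent sets in your scheme are no longer valid (a spoke vertex adjacent to $v_0$ would have a boundary edge and no chip), so you cannot in general place ``all six chips on $v_0$'' this way --- fortunately one chip on $v_0$ is all that \autoref{cor:discrete-rank-determining-set} requires, and your sequence is valid up to and including the step at which the first chip arrives. (Also, \autoref{prop:increasing-firing-sets} is not needed there: exhibiting valid firings directly witnesses the equivalence.) For the lower bound, the paper runs Dhar's algorithm from the chipless transition vertex of a one-chip cycle only until three $\Tm$-neighbours of $v_0$ are burned, and then gets an immediate degree contradiction from \myautoref{lem:helper-lemma}{itm:helper:neighbours}, since the three corresponding half-open spokes carry only the two chips on $v_0$. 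You instead push the burn to completion and conclude that the divisor is $w$-reduced with no chip on $w$; your case analysis --- $(k_1,k_2,k_3)=(1,1,1)$ giving a cascade around the ring, versus a permutation of $(2,1,0)$ giving four burned transition vertices, then $v_0$ burning with two chips against at least three burned spokes, then the remaining transition vertices burning with one chip against two burned edges --- checks out, because \autoref{lem:graad-5} and the degree bound force all cycle interiors and spoke interiors to be chip-free. Your version is longer and needs the full burn to terminate correctly, but it has the small advantage of not using \autoref{lem:helper-lemma} in the final step; the paper's version is shorter and localizes the contradiction to a single chip count.
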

\begin{proof}
	Suppose, for the sake of contradiction, that $\dgon(G) \leq 5$. Then we may choose a positive rank $v_0$-reduced divisor $D \in \Div(G)$ with $\deg(G) \leq 5$.
	We interpret $G$ as a subdivision of the minimal tricycle $\Tm$.
	It follows from \autoref{lem:graad-5} that $D$ has two chips on $v_0$ and exactly one chip on every transition path.
	Since $G$ is a tricycle graph, the transition edges of $\Tm$ are not subdivided. Therefore a chip on a transition edge must lie on one of the transition vertices.
		
	By the above, the divisor $D$ has between $0$ and $2$ chips on each of the cycles $C_1$, $C_2$, $C_3$ on the outer ring, and all such chips must lie on the transition vertices. Since the total number of chips on the outer ring is odd, there must be a cycle $C_i$ with exactly one chip. Assume without loss of generality that $C_1$ is such a cycle, and that $D(\vv{1}{-}) = 0$ and $D(\vv{1}{+}) = 1$.
	
	We start an execution of Dhar's burning algorithm on $(G,D,\vv{1}{-})$. Since there is only one chip on $C_1$, the entire cycle $C_1$ is burned. In particular, the vertex $\vv{1}{+}$ is burned. The transition edge $\vv{1}{+}\vv{2}{-}$ has exactly one chip, which is on $\vv{1}{+}$, so the fire spreads via this edge to the vertex $\vv{2}{-}$, which is also burned.
	But now we see that at least three $\Tm$-neighbours of $v_0$ are burned (namely, $\vv{1}{-}$, $\vv{1}{+}$ and $\vv{2}{-}$), so it follows from \myautoref{lem:helper-lemma}{itm:helper:neighbours} that there must be at least $3$ chips on $\pathco{v_0}{\vv{1}{-}} \cup \pathco{v_0}{\vv{1}{+}} \cup \pathco{v_0}{\vv{2}{-}}$. This is a contradiction, and we conclude that $\dgon(G) \geq 6$.
	
	To see that $\dgon(G) \leq 6$, note that the set $\{\vv{1}{-},\vv{1}{+},\vv{2}{-},\vv{2}{+},\vv{3}{-},\vv{3}{+}\}$ of all transition vertices is a strong separator. Therefore the effective divisor with one chip on each of the transition vertices has positive rank, by \autoref{thm:strong-separator}, so $\dgon(G) \leq 6$.
\end{proof}

This concludes the proof of validity of our counterexample.
In summary, every tricycle graph $G$ satisfies $\dgon(G) = 6$ and $\dgon(\Gamma(G)) = \dgon(\sigma_2(G)) = 5$.

\section{A family of examples with larger gaps}
\label{sec:skewered}
In this section, we combine tricycle graphs in a certain way in order to obtain graphs $G_k$ with $\dgon(G_k) = 6k$ and $\dgon(\sigma_2(G_k)) = \dgon(\Gamma(G_k)) = 5k$, which shows that the gap between $\dgon(\Gamma(G))$ and $\dgon(G)$ can be arbitrarily large.
Furthermore, we show that $\dgonr{\Gamma(G)}{r}$ and $\dgonr{G}{r}$ differ by at most a factor $2$.

\begin{definition}
	\label{def:skewered}
	Given a (connected) simple graph $H$ and an integer $t \geq 1$, an \emph{$(H,t)$-skewered graph} is a graph $G$ that can be obtained in the following way:
	\begin{itemize}
		\item Start with a disjoint union of graphs $G_1,\ldots,G_n$, where $n = |V(H)|$.
		\item For every $i\in [n]$, choose a base vertex $w_i \in V(G_i)$;
		\item For every edge $ij \in E(H)$, add $t$ parallel edges between $w_i$ and $w_j$, and subdivide these edges in an arbitrary way.
	\end{itemize}
	An example of a $(K_2,12)$-skewered graph is given in \autoref{fig:bipartite-skewered-tricycle} below.
\end{definition}

\begin{lemma}
	\label{lem:skewer}
	Let $G$ be an $(H,t)$-skewered graph with $t \geq \sum_{i=1}^{|V(H)|} \dgon(G_i)$. Then $\dgon(G) = \sum_{i=1}^{|V(H)|} \dgon(G_i)$.
\end{lemma}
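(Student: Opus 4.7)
The plan splits into an upper bound $\dgon(G) \leq d := \sum_i \dgon(G_i)$ and a matching lower bound; only the latter uses the hypothesis $t \geq d$.

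For the upper bound, select a positive-rank divisor $D_i \in \Div(G_i)$ of degree $\dgon(G_i)$ for each $i$, and set $D := \sum_i D_i \in \Div(G)$ via the natural inclusion $V(G_i) \hookrightarrow V(G)$. Since $G$ is a subdivision of the multigraph $G''$ obtained from $G_1 \sqcup \cdots \sqcup G_n$ by adding $t$ un-subdivided parallel edges between $w_i$ and $w_j$ for each $ij \in E(H)$, and $V(G'') = \bigsqcup_i V(G_i)$, by \autoref{cor:discrete-rank-determining-set} it suffices to show that $D$ reaches every $v \in V(G_i)$. Fix $v \in V(G_i)$ and choose an effective $\tilde D_i \sim_{G_i} D_i$ with $\tilde D_i(v) \geq 1$, then write $\tilde D_i - D_i = L_{G_i} y$. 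After shifting $y$ by a constant (which does not affect $L_{G_i} y$) we may assume $y(w_i) = 0$. Extending $y$ by zero to $\bar y \in \Z^{V(G)}$, a case check confirms $L_G \bar y = L_{G_i} y$ on $V(G_i)$ and $L_G \bar y = 0$ outside $V(G_i)$ --- the crucial ingredient being $y(w_i) = 0$, which prevents chips from spilling onto the skewer paths incident to $w_i$. Hence $D$ is equivalent in $G$ to the effective divisor $\tilde D_i + \sum_{j \neq i} D_j$, which has a chip at $v$.

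For the lower bound, assume for contradiction that $D$ is a positive-rank $w_1$-reduced divisor on $G$ with $\deg(D) < d \leq t$. Then $D$ is effective and $D(w_1) \geq 1$. Since $\deg(D) < t$, for each edge $ij \in E(H)$ the total number of chips on the $t$ skewer paths between $w_i$ and $w_j$ is strictly less than $t$, so at least one skewer path per edge is ``free'' (chip-free interior). I argue by induction on $|V(H)|$: when $H$ has a leaf $j$ with unique neighbor $i$, the graph $G$ is a $(K_2, t)$-skewered graph of $G_j$ and the smaller $(H \setminus \{j\}, t)$-skewered graph $G'$, so the induction hypothesis together with a $(K_2, t)$-base case finishes the step. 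When $H$ has no leaf, pick an edge $e$ on a cycle of $H$ (keeping $H \setminus e$ connected) and, using a free skewer path for $e$ to transfer chips without cost, reduce to the $(H \setminus e, t)$-skewered graph.

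The main obstacle is the $H = K_2$ base case: for $G$ a $(K_2, t)$-skewered graph of $G_A, G_B$ with $t \geq \dgon(G_A) + \dgon(G_B)$, prove $\dgon(G) \geq \dgon(G_A) + \dgon(G_B)$. I plan to attack this via Dhar's burning algorithm: the thick bundle of $t$ parallel paths acts as a ``bottleneck'', and the bound $t \geq \dgon(G_A) + \dgon(G_B)$ forces any positive-rank divisor $D$ on $G$ to carry at least $\dgon(G_A)$ chips attributable to the $G_A$ side and $\dgon(G_B)$ to the $G_B$ side (made precise by projecting $D$ onto $G_A$ and $G_B$ via free skewer paths and verifying positive rank on each projection). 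A degree count on these two disjoint contributions then delivers the required contradiction $\deg(D) \geq \dgon(G_A) + \dgon(G_B)$.
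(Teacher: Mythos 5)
Your upper bound is correct and is essentially the paper's argument: you combine minimal positive-rank divisors $D_i$ and show $D$ reaches every vertex of every $G_i$ by simulating the chip-firing game of $G_i$ inside $G$; normalizing $y(w_i)=0$ and extending by zero is the Laplacian rephrasing of the paper's trick of replacing each firing set $U_j$ containing $w_i$ by $U_j \cup \compl{V(G_i)}$. The appeal to \autoref{cor:discrete-rank-determining-set} is also the same.

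The lower bound, however, has a genuine gap: the entire weight of the argument rests on the claim that $D$ ``projects'' to a positive-rank divisor on each $G_i$, and you never prove this --- you explicitly defer it (``I plan to attack this via Dhar's burning algorithm\dots made precise by projecting $D$\dots and verifying positive rank on each projection''). That verification is the whole difficulty, and your sketch does not contain the idea that makes it work. The paper's route is: (i) because every cut separating some $w_i$ from some $w_j$ with $ij \in E(H)$ has at least $t > \deg(D)$ edges, no valid firing set can separate the base vertices, so the $w_1$-reduced divisor $D$ is in fact $w_i$-reduced for \emph{every} $i$; (ii) by \autoref{prop:increasing-firing-sets}, reaching any $v \in V(G_i)$ can be done by an increasing chain of valid sets all containing $w_i$, which therefore never send chips into $G_i$ and restrict to a valid firing sequence in $G_i$; hence the plain restriction $D|_{G_i}$ already has positive rank on $G_i$, and $\deg(D) \geq \sum_i \deg(D|_{G_i}) \geq \sum_i \dgon(G_i)$. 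Nothing in your outline supplies step (i) or (ii); ``free skewer paths'' do not obviously yield a rank-preserving projection (positive rank of $D$ on $G$ only guarantees that chips can be \emph{brought} to each $v\in V(G_i)$, possibly from the other side of the bundle, and you must rule that out). Moreover, your induction on $|V(H)|$ is an unnecessary detour whose no-leaf step (``delete an edge $e$ on a cycle, using a free skewer path to transfer chips without cost'') is not justified: deleting $t$ skewer paths changes the graph, and no correspondence between positive-rank divisors on $G$ and on the smaller skewered graph is established. Note that the direct argument above needs no induction and handles all connected $H$ at once.
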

\begin{proof}
	First, we prove that $\dgon(G) \leq \sum_{i=1}^{|V(H)|} \dgon(G_i)$. For every $i$, choose a positive rank divisor $D_i \in \Div(G_i)$ of minimum degree. This defines a divisor $D \in \Div(G)$ with $\deg(D) = \sum_{i=1}^{|V(H)|} \dgon(G_i)$. We prove that $D$ has positive rank. By \autoref{cor:discrete-rank-determining-set}, it suffices to prove that $D$ reaches all vertices of every $G_i$. Let $v \in V(G_i)$, and choose an effective divisor $D_i' \in \Div(G_i)$ equivalent to $D_i$ with $D_i'(v) > 0$. By \autoref{prop:increasing-firing-sets}, we can go from $D_i$ to $D_i'$ by subsequently firing an increasing sequence $U_1 \subseteq \cdots \subseteq U_k \subseteq V(G_i)$ of valid sets. Define $U_1' \subseteq \cdots \subseteq U_k' \subseteq V(G)$ by
	\[ U_j' := \begin{cases}
		U_j,&\text{if $w_i \notin U_j$};\\[1ex]
		U_j \cup \compl{V(G_i)},&\text{if $w_i \in U_j$}.
	\end{cases} \]
	Then, starting with $D$ and subsequently firing the sets $U_1' \subseteq \cdots \subseteq U_k'$, we obtain an equivalent divisor $D' = D - D_i + D_i' \in \Div(G)$. In other words, we can play the chip-firing game on $G_i$ while leaving the remainder of $G$ unchanged. This shows that $D$ reaches all vertices of every $G_i$, so it follows from \autoref{cor:discrete-rank-determining-set} that $\rank(D) \geq 1$.
	
	Next, we prove that $\dgon(G) \geq \sum_{i=1}^{|V(H)|} \dgon(G_i)$. Suppose, for the sake of contradiction, that $D \in \Div(G)$ is a positive rank $w_1$-reduced divisor with $\deg(D) < \sum_{i=1}^{|V(H)|} \dgon(G_i)$.
	We claim that $D$ is $w_i$-reduced for all $i$.
	To that end, let $S \subseteq V(G)$ be a subset for which there is some $ij \in E(H)$ with $w_i \in S$ and $w_j \notin S$. Since there are $t$ parallel paths in $G$ between $w_i$ and $w_j$, it follows from the max-flow min-cut theorem that $|E(S,\compl{S})| \geq t$. Therefore, $|E(S,\compl{S})| \geq t \geq \sum_{i=1}^{|V(H)|} \dgon(G_i) > \deg(D)$, so $S$ cannot be fired.
	Thus, if $S \subseteq V(G)$ is a subset which can be fired, then $w_1 \in S$ (because $D$ is $w_1$-reduced), and therefore $w_i \in S$ for all $i$ (because $H$ is connected).
	This proves our claim that $D$ is $w_i$-reduced for all $i$.
	
	Next, we claim that $D$ restricts to a positive rank divisor on every $G_i$. Indeed, let $v \in V(G_i)$ for some $i$, and choose an equivalent effective divisor $D' \in \Div(G)$ with $D'(v) > 0$. By \autoref{prop:increasing-firing-sets}, we can go from $D$ to $D'$ by subsequently firing an increasing sequence $U_1 \subseteq \cdots \subseteq U_k$ of valid sets. Since $D$ is $w_i$-reduced, we have $w_i \in U_1$, and therefore $w_i \in U_j$ for all $j$. Since $w_i$ is the only vertex in $G_i$ connected to anything outside of $G_i$, the firing sequence $U_1 \subseteq \cdots \subseteq U_k$ only ever sends chips out of $G_i$, and never into $G_i$. Hence it restricts to a valid firing sequence in $G_i$, which shows that the restricted divisor $D|_{G_i} \in \Div(G_i)$ reaches $v$. This proves our claim that $D$ restricts to a positive rank divisor on every $G_i$.
	But now it follows that $\deg(D) \geq \sum_{i=1}^{|V(H)|} \dgon(G_i)$, contrary to our assumption. This is a contradiction.
\end{proof}

\begin{proof}[{Proof of \autoref{thm:counterexample}}]
	Let $G_1,\ldots,G_k$ be tricycle graphs, and let $H$ be an arbitrary connected simple graph on $k$ vertices. Choose $t \geq 6k$, and let $G$ be an $(H,t)$-skewered graph obtained from the graphs $G_1,\ldots,G_k$. Then it follows from \autoref{lem:skewer} that $\dgon(G) = 6k$. Furthermore, for every $s \in \N_1$, the subdivided graph $\sigma_s(G)$ is an $(H,t)$-skewered graph relative to the base graphs $\sigma_s(G_1),\ldots,\sigma_s(G_k)$, so it follows from \autoref{lem:skewer} and \autoref{cor:dgon-metric} that
	\[ \dgon(\sigma_s(G)) = \sum_{i=1}^k \dgon(\sigma_s(G_i)) \geq \sum_{i=1}^k \dgon(\Gamma(G_i)) = 5k, \]
	with equality if $s = 2$. Therefore $\dgon(\Gamma(G)) = \dgon(\sigma_2(G)) = 5k$.
	
	A simple and bipartite realization can be obtained by choosing the tricycles $G_1,\ldots,G_k$ simple and bipartite (e.g. the tricycles skewered together in \autoref{fig:bipartite-skewered-tricycle}), and choosing an appropriate subdivision in the process of \autoref{def:skewered}.
\end{proof}

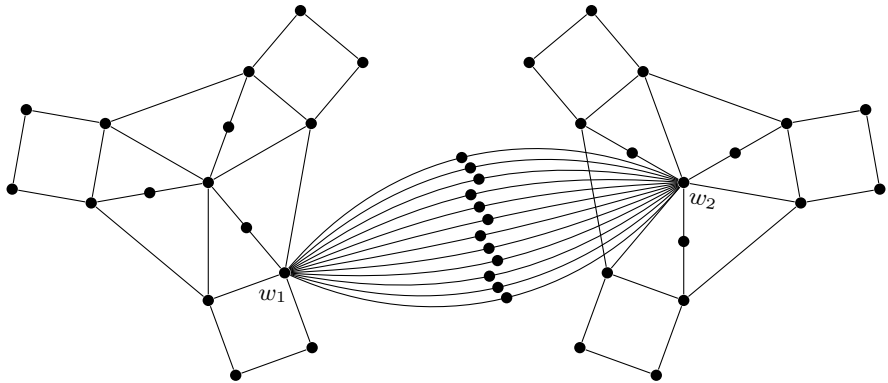
\begin{figure}[t]
	\centering
	\pgfmathsetmacro{\curschaal}{\schaal * 0.46}
	\begin{tikzpicture}[scale=\curschaal]
		\foreach \x in {1,2} {
			\pgfmathsetmacro{\xsh}{(\x - 1) * 8}
			\pgfmathsetmacro{\xrot}{(2 * \x - 3) * -20}
			\begin{scope}[xshift=\xsh cm,rotate=\xrot]
				\node[vertex] (v0\x) at (0,0) {};
				\node[vertex] (v1\x) at (10:2cm) {};
				\node[vertex] (v2\x) at (50:2cm) {};
				\node[vertex] (v3\x) at (130:2cm) {};
				\node[vertex] (v4\x) at (170:2cm) {};
				\node[vertex] (v5\x) at (250:2cm) {};
				\node[vertex] (v6\x) at (290:2cm) {};
				\draw (v1\x) ++(30:13.5mm) node[vertex] (v7\x) {};
				\draw (v2\x) ++(30:13.5mm) node[vertex] (v8\x) {};
				\draw (v3\x) ++(150:13.5mm) node[vertex] (v9\x) {};
				\draw (v4\x) ++(150:13.5mm) node[vertex] (v10\x) {};
				\draw (v5\x) ++(270:13.5mm) node[vertex] (v11\x) {};
				\draw (v6\x) ++(270:13.5mm) node[vertex] (v12\x) {};
				\draw (v1\x) -- (v2\x) -- (v3\x) -- (v4\x) -- (v5\x) -- (v6\x) -- (v1\x);
				\draw (v1\x) -- (v7\x) -- (v8\x) -- (v2\x);
				\draw (v3\x) -- (v9\x) -- (v10\x) -- (v4\x);
				\draw (v5\x) -- (v11\x) -- (v12\x) -- (v6\x);
				\draw (v0\x) -- (v1\x);
				\draw (v0\x) to node[vertex] {} (v2\x);
				\draw (v0\x) -- (v3\x);
				\draw (v0\x) to node[vertex] {} (v4\x);
				\draw (v0\x) -- (v5\x);
				\draw (v0\x) to node[vertex] {} (v6\x);
			\end{scope}
		}
		\draw (v61) ++(243:.44) node {\small$w_1$};
		\draw (v02) ++(-44:.45) node[rotate=-8] {\small$w_2$};
		\foreach \x in {1,...,12} {
			\pgfmathsetmacro{\hoek}{(\x - 6.5) * 6.7}
			\pgfmathsetmacro{\pos}{(mod(\x + 2,3) - 1) * -0.015 + 0.5}
			\draw (v02) to[bend left=\hoek] node[vertex,pos=\pos] {} (v61);
		}
	\end{tikzpicture}
	\caption{A simple, bipartite, $(K_2,12)$-skewered tricycle graph $G$ satisfying $\dgon(G) = 12$ and $\dgon(\Gamma(G)) = \dgon(\sigma_2(G)) = 10$.}
	\label{fig:bipartite-skewered-tricycle}
\end{figure}

\autoref{thm:counterexample} shows that the discrete and metric divisorial gonality can be arbitrarily far apart.
The following simple result shows that large gaps like this can only occur when the metric gonality is also large.
\begin{proposition}\label{prop:bound}
	Let $G$ be a graph. For every $r \geq 1$, one has $\dgonr{G}{r} \leq 2\dgonr{\Gamma(G)}{r} - r$.
\end{proposition}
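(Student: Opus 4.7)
The plan is to take a rank-$r$ divisor $D$ of degree $d := \dgonr{\Gamma(G)}{r}$ on $\Gamma(G)$ and round it to a divisor $D^* \in \Div(G)$ of degree at most $2d - r$ whose rank is at least $r$. By \myautoref{thm:rounding}{itm:rounding:rational} I may assume $D$ is supported on $V(\sigma_k(G))$ for some integer $k \geq 1$, so that $D$ is a rank-$r$ divisor on $\sigma_k(G)$; replacing $D$ with its $q$-reduced representative for some fixed $q \in V(G)$, the rank hypothesis gives $D(q) \geq r$, and since every interior subdivision vertex has degree~$2$ in $\sigma_k(G)$, Dhar's algorithm forces $D(p) \leq 1$ for every $p \in V(\sigma_k(G)) \setminus V(G)$. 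I then define $D^* \in \Div(G)$ by keeping $D$'s chips at $V(G)$ and, for each interior subdivision vertex $p$ with $D(p) = 1$ lying in the $G$-edge $uv$, adding one chip at $u$ and one chip at $v$. If $d_V$ and $d_E$ denote the total chips of $D$ at $V(G)$ and at interior subdivision vertices respectively, then $d_V + d_E = d$ and $\deg(D^*) = d_V + 2d_E = d + d_E$; since $d_V \geq D(q) \geq r$, this yields $d_E \leq d - r$ and hence $\deg(D^*) \leq 2d - r$.

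The remaining task is to show $\rank_G(D^*) \geq r$. By \cite[Thm.~1.3]{Hladky-Kral-Norine} this equals $\rank_{\Gamma(G)}(D^*)$, and by \autoref{thm:rank-determining-set} it suffices to show that $D^* - E$ is equivalent on $\Gamma(G)$ to an effective divisor for every $E \in \Div_+^r(V(G))$. Setting $\sigma(D_E) := \sum_{p : D(p) = 1}(u_p + v_p)$ and letting $D_E$ be the interior part of $D$, one has $D^* - E \sim F_E + \sigma(D_E) - D_E$, where $F_E$ is any effective divisor equivalent to $D - E$ (which exists by the rank hypothesis). My plan is to combine $F_E$ with a suitable integer combination of the ``tent'' rational functions $f_p$, whose principal divisors satisfy $\mdiv(f_p) = (k - j_p)\,u_p + j_p\,v_p - kp$ (where $p$ lies at position $j_p/k$ along its edge): picking $\beta_p = -1$ at the deficient interior points (those $p$ with $F_E(p) = 0$) restores positivity at $p$ at the cost of $k-1$ ``gained'' chips there, and $\beta_p = 0$ elsewhere preserves the already non-negative contributions.

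The main obstacle I expect is precisely this last step, because the rounding correction $\sigma(D_E) - D_E$ is not in general equivalent on its own to an effective divisor. Each tent correction applied at a deficient interior chip introduces negative contributions at the two endpoints of that chip's edge, and it is not immediate that the total of such contributions can be dominated by $F_E + \sigma(D_E)$ at every $v \in V(G)$ simultaneously. The structural inputs I would rely on to close this bookkeeping are the bound $D(p) \leq 1$ at interior points (from $q$-reducedness), which caps the per-chip compensation cost, and the at-least-$r$ chips of $D$ on $V(G)$, which provide the leeway; if necessary, $k$ can be enlarged via further subdivision before carrying out the compensating estimate.
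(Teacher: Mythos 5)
Your setup and degree count are fine: rounding $D$ onto $V(\sigma_k(G))$ via \myautoref{thm:rounding}{itm:rounding:rational}, passing to a $q$-reduced representative to force at least $r$ chips on $V(G)$ and at most one chip on each interior subdivision point, and concluding $\deg(D^*)\leq 2d-r$ are all correct (the paper reaches the same bound more directly, by picking any $E\in\Div_+^r(G)$ and a representative $D_1'\sim D_1$ with $D_1'\geq E$). But the proof is incomplete: you stop at exactly the step that carries all the content, and the obstacle you describe is an artifact of choosing the wrong rational functions. Your assertion that the correction $\sigma(D_E)-D_E$ ``is not in general equivalent on its own to an effective divisor'' is false on the metric graph, and recognizing this is the whole proof. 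For a single interior chip at $p$ on the edge $uv$ of $\Gamma(G)$, say at distance $a\leq\tfrac12$ from $u$, take the rational function that is $0$ outside the edge and equals $-\min\{t,\,1-t,\,a\}$ at the point at distance $t$ from $u$ along the edge. Its principal divisor is $-u-v+p+p''$, where $p''$ is the mirror image of $p$ across the midpoint (this is the paper's ``fire everything but the interior of $uv$'' move). Hence $u+v-p\sim p''\geq 0$, and summing these functions over all interior chips shows that $D^*$ is equivalent on $\Gamma(G)$ to a divisor dominating $D$. Since $\rank_{\Gamma(G)}(D)\geq r$, this gives $\rank_{\Gamma(G)}(D^*)\geq r$ at once, and $\rank_G(D^*)=\rank_{\Gamma(G)}(D^*)$ by \cite[Thm.~1.3]{Hladky-Kral-Norine}; no per-$E$ bookkeeping with witnesses $F_E$ is needed.

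The reason your planned compensation gets stuck is that your tent functions $f_p$ with $\mdiv(f_p)=(k-j_p)u_p+j_pv_p-kp$ move $k$ chips at a time and have slopes of magnitude up to $k$ at the endpoints, so each correction with $\beta_p=-1$ costs $k-j_p$ chips at $u_p$ and $j_p$ chips at $v_p$ --- far more than the single added chip at each endpoint can pay for, and nothing in $F_E+\sigma(D_E)$ is guaranteed to cover the deficit. The two-sided trapezoidal function above has slope $1$ at both endpoints, which is exactly what the two added chips buy, leaving a surplus chip at $p''$ rather than a deficit. With that substitution your argument closes and becomes the paper's proof.
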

\begin{proof}
	Let $D_1 \in \Div(\Gamma(G))$ be a divisor of rank $r$ and degree $d := \dgonr{\Gamma(G)}{r}$.
	Choose some $E \in \Div_+^r(G)$, and choose a divisor $D_1' \sim D_1$ such that $D_1' \geq E$.
	Let $D_2 \in \Div(G)$ be the divisor obtained from $D_1'$ by replacing every chip on the interior of some edge $uv \in E(G)$ by one chip on $u$ and one chip on $v$.
	Since $D_1' \geq E$ and $\supp(E) \subseteq V(G)$, the divisor $D_1'$ has at least $r$ chips on vertices of $G$, so $\deg(D_2) \leq 2d - r$.
	By firing everything but the interior of the edge $uv$, we can move the newly added chips on $u$ and $v$ so that one of the two reaches the original position of the chip in $D_1'$ and the other becomes superfluous.
	This shows that $D_2$ is equivalent on $\Gamma$ to a divisor $D_2'$ with $D_2' \geq D_1'$, so $\rank_G(D_2) = \rank_\Gamma(D_2) \geq r$, by \cite[Thm.~1.3]{Hladky-Kral-Norine}.
\end{proof}

\section{Computational results and open questions}
\label{sec:closing-remarks}

Apart from the tricycle graphs, we have found a few other counterexamples, which we sketch here.
First of all, the proofs from \mysecref{sec:main-counterexample} still hold if each of the cycles $C_1$, $C_2$ and $C_3$ is replaced by any graph $C$ which has two distinct vertices $\vv{}{-},\vv{}{+}$ such that: (i) there are two edge-disjoint paths between $\vv{}{-}$ and $\vv{}{+}$; (ii) the divisor $\vv{}{-} + \vv{}{+}$ has positive rank on $C$.

Second, we have found a number of counterexamples which we have verified computationally, but for which we have no formal proof.
Most of these have a structure very similar to a tricycle graph: there are $3$ cycles which are connected to one another and to a central vertex in some way.
A small selection of these counterexamples is given in \autoref{fig:additional-counterexamples}.
In each of these, the optimal divisor on the $2$-regular subdivision $\sigma_2(G)$ has $3$ chips on the midpoints of certain edges, and $2$ or $3$ chips on the central vertex.
Note that the counterexample depicted in \myautoref*{fig:additional-counterexamples}{c} is $3$-regular.
We have also found counterexamples where the outer ring has $5$ or $7$ cycles; see \myautoref*{fig:additional-counterexamples}{d}.
We have not found a counterexample with $9$ or more cycles on the outer ring.
See \cite{gonality-code} for code and additional figures.

\begin{figure}[h!t]
	\centering
	\pgfmathsetmacro{\curschaal}{\schaal * 0.5}
	\def\chipkleur{cyan!90!black}
	\def\mylabelx{-2}
	\def\mylabely{-1.5}
	\def\abshift{2mm}
	\def\efghschaal{1.25}
	\def\efghshift{.5mm}
	\begin{tikzpicture}[scale=\curschaal,
	                    yscale=-1,
	                    vertex/.style={circle,inner sep=1.3pt,fill=black!70},
	                    edge_chip/.style={regular polygon,regular polygon sides=6,fill=\chipkleur,inner sep=1.65pt},
	                    vertex_chip/.style={edge_chip}]
		\begin{scope}
			\begin{scope}[yshift=\abshift,rotate=-120]
				\node[vertex] (v0) at (0,0) {};
				\node[vertex] (v1) at (30:1cm) {};
				\node[vertex] (v2) at (300:1cm) {};
				\node[vertex] (v3) at (240:1cm) {};
				\node[vertex] (v4) at (180:1cm) {};
				\node[vertex] (v5) at (120:1cm) {};
				\draw (v3) ++(-60:1cm) node[vertex] (v6) {};
				\draw (v5) ++(-1,0) node[vertex] (v7) {};
				\begin{scope}[shift=(v1)]
					\node[vertex] (v8) at (0:1cm) {};
					\node[vertex] (v9) at (60:1cm) {};
				\end{scope}
				\draw (v0) to[bend left=15] (v1);
				\draw (v0) to[bend right=15] (v1);
				\draw (v0) to[bend left=15] (v2);
				\draw (v0) to[bend right=15] (v2);
				\draw (v0) to[bend left=15] (v5);
				\draw (v0) to[bend right=15] (v5);
				\draw (v1) to node[edge_chip] {} (v2);
				\draw (v5) to node[edge_chip] {} (v1);
				\draw (v3) to node[edge_chip] {} (v4);
				\draw (v9) -- (v8) -- (v1) -- (v9);
				\draw (v6) -- (v2) -- (v3) -- (v6);
				\draw (v7) -- (v4) -- (v5) -- (v7);
			\end{scope}
			\draw (v0) ++(0,.105) node[vertex_chip] {};
			\draw (v0) ++(0,-.105) node[vertex_chip] {};
			\node[anchor=base west] at (\mylabelx,\mylabely) {(a)};
		\end{scope}
		
		\begin{scope}[xshift=4cm]
			\begin{scope}[yshift=\abshift,rotate=-120]
				\node[vertex] (v9) at (0,-.1) {};
				\node[vertex] (v1) at (240:1cm) {};
				\node[vertex] (v2) at (300:1cm) {};
				\begin{scope}[shift=(v1)]
					\node[vertex] (v0) at (-60:1cm) {};
				\end{scope}
				\node[vertex] (v3) at (30:1cm) {};
				\begin{scope}[shift=(v3)]
					\node[vertex] (v5) at (60:1cm) {};
					\node[vertex] (v6) at (0:1cm) {};
				\end{scope}
				\node[vertex] (v4) at (150:1cm) {};
				\begin{scope}[shift=(v4)]
					\node[vertex] (v7) at (120:1cm) {};
					\node[vertex] (v8) at (180:1cm) {};
				\end{scope}
				\draw (v0) -- (v1) -- (v2) -- (v0);
				\draw (v1) to node[edge_chip] {} (v4) -- (v7) -- (v8) -- (v4) to node[edge_chip] {} (v3) to node[edge_chip] {} (v2);
				\draw (v3) -- (v5) -- (v6) -- (v3);
				\draw (v2) to[bend left=15] (v9);
				\draw (v2) to[bend right=15] (v9);
				\draw (v3) to[bend left=15] (v9);
				\draw (v3) to[bend right=15] (v9);
				\draw (v4) to[bend left=15] (v9);
				\draw (v4) to[bend right=15] (v9);
			\end{scope}
			\draw (v9) ++(0,.105) node[vertex_chip] {};
			\draw (v9) ++(0,-.105) node[vertex_chip] {};
			\node[anchor=base west] at (\mylabelx,\mylabely) {(b)};
		\end{scope}
		
		\begin{scope}[xshift=8cm]
			\begin{scope}[scale=.86,yshift=-1.8mm]
				\node[vertex] (v0) at (0,0) {};
				\draw (v0) ++(30:6mm) node[vertex] (v1) {};
				\draw (v1) ++(-20:8mm) node[vertex] (v2) {};
				\draw (v1) ++(80:8mm) node[vertex] (v3) {};
				\draw (v2) ++(10:6mm) node[vertex] (v4) {};
				\draw (v3) ++(50:6mm) node[vertex] (v5) {};
				\draw (v0) ++(150:6mm) node[vertex] (v6) {};
				\draw (v6) ++(100:8mm) node[vertex] (v7) {};
				\draw (v6) ++(200:8mm) node[vertex] (v8) {};
				\draw (v7) ++(130:6mm) node[vertex] (v9) {};
				\draw (v8) ++(170:6mm) node[vertex] (v10) {};
				\draw (v0) ++(270:6mm) node[vertex] (v11) {};
				\draw (v11) ++(220:8mm) node[vertex] (v12) {};
				\draw (v11) ++(320:8mm) node[vertex] (v13) {};
				\draw (v12) ++(250:6mm) node[vertex] (v14) {};
				\draw (v13) ++(290:6mm) node[vertex] (v15) {};
				\draw (v0) -- (v1) -- (v2) -- (v3) -- (v1);
				\draw (v2) -- (v4) -- (v5) -- (v3);
				\draw (v0) -- (v6) -- (v7) -- (v8) -- (v6);
				\draw (v7) -- (v9) -- (v10) -- (v8);
				\draw (v0) -- (v11) -- (v12) -- (v13) -- (v11);
				\draw (v12) -- (v14) -- (v15) -- (v13);
				\draw (v5) to node[edge_chip] {} (v9);
				\draw (v10) to node[edge_chip] {} (v14);
				\draw (v15) to node[edge_chip] {} (v4);
			\end{scope}
			\draw (v0) ++(.105,0) node[vertex_chip] {};
			\draw (v0) ++(-.105,0) node[vertex_chip] {};
			\node[anchor=base west] at (\mylabelx,\mylabely) {(c)};
		\end{scope}
		
		\begin{scope}[xshift=12cm]
			\begin{scope}[scale=.75,yshift=-2.1mm,yscale=-1]
				\def\numcykels{5}
				\pgfmathtruncatemacro{\outerring}{2*\numcykels}
				\node[vertex] (v0) at (0,0) {};
				\foreach \x in {1,...,\outerring} {
					\pgfmathsetmacro{\hoek}{(\x - 1) / \outerring * 360 + 3.5 * (-1)^(\x - 1)}
					\node[vertex] (v\x) at (\hoek:2cm) {};
				}
				\foreach \x in {1,...,\outerring} {
					\draw (v0) -- (v\x);
					\pgfmathtruncatemacro{\y}{mod(\x,\outerring) + 1}
					\ifodd\x
						\draw (v\x) to[bend left=20] (v\y);
						\draw (v\x) to[bend right=20] (v\y);
					\else
						\draw (v\x) to[bend right=5] node[edge_chip] {} (v\y);
					\fi
				}
			\end{scope}
			\draw (v0) ++(0,0.105) node[vertex_chip] {};
			\draw (v0) ++(0,-0.105) node[vertex_chip] {};
			\node[anchor=base west] at (\mylabelx,\mylabely) {(d)};
		\end{scope}
		
		\begin{scope}[yshift=4cm]
			\begin{scope}[scale=\efghschaal,yshift=\efghshift]
				\node[vertex] (v9) at (0,0) {};
				\node[vertex] (v3) at (10:8mm) {};
				\node[vertex] (v6) at (50:8mm) {};
				\node[vertex] (v2) at (130:8mm) {};
				\node[vertex] (v5) at (170:8mm) {};
				\node[vertex] (v1) at (250:8mm) {};
				\node[vertex] (v7) at (290:8mm) {};
				\begin{scope}[shift=(v3)]
					\node[vertex] (v0) at (50:8mm) {};
				\end{scope}
				\begin{scope}[shift=(v2)]
					\node[vertex] (v8) at (170:8mm) {};
				\end{scope}
				\begin{scope}[shift=(v1)]
					\node[vertex] (v4) at (290:8mm) {};
				\end{scope}
				\draw (v8) to[bend left=15] node[edge_chip] {} (v0);
				\draw (v7) to[bend right=20] node[edge_chip] {} (v0);
				\draw (v8) to[bend right=20] node[edge_chip] {} (v1);
				\draw (v2) -- (v8) (v0) -- (v3) -- (v9) -- (v1) (v8) -- (v5) -- (v9) -- (v2) -- (v5);
				\draw (v3) -- (v6) -- (v0) (v7) -- (v1) -- (v4) -- (v7) -- (v9) -- (v6);
			\end{scope}
			\draw (v9) ++(0,0.105) node[vertex_chip] {};
			\draw (v9) ++(0,-0.105) node[vertex_chip] {};
			\node[anchor=base west] at (\mylabelx,\mylabely) {(e)};
		\end{scope}
		
		\begin{scope}[xshift=4cm,yshift=4cm]
			\begin{scope}[scale=\efghschaal,yshift=\efghshift]
				\node[vertex] (v9) at (0,0) {};
				\node[vertex] (v3) at (10:8mm) {};
				\node[vertex] (v0) at (50:8mm) {};
				\node[vertex] (v2) at (130:8mm) {};
				\node[vertex] (v5) at (170:8mm) {};
				\node[vertex] (v1) at (250:8mm) {};
				\node[vertex] (v4) at (290:8mm) {};
				\begin{scope}[shift=(v3)]
					\node[vertex] (v6) at (50:8mm) {};
				\end{scope}
				\begin{scope}[shift=(v2)]
					\node[vertex] (v8) at (170:8mm) {};
				\end{scope}
				\begin{scope}[shift=(v1)]
					\node[vertex] (v7) at (290:8mm) {};
				\end{scope}
				\draw (v8) to[bend left=20] node[edge_chip] {} (v0);
				\draw (v7) to[bend right=30] node[edge_chip] {} (v0);
				\draw (v8) to[bend right=20] node[edge_chip] {} (v1);
				\draw (v0) -- (v3) -- (v9) -- (v0) -- (v6) -- (v3);
				\draw (v2) -- (v8) -- (v5) -- (v9) -- (v1) -- (v4) -- (v9) -- (v2) -- (v5);
				\draw (v4) -- (v7) -- (v1);
			\end{scope}
			\draw (v9) ++(0,0.105) node[vertex_chip] {};
			\draw (v9) ++(0,-0.105) node[vertex_chip] {};
			\node[anchor=base west] at (\mylabelx,\mylabely) {(f)};
		\end{scope}
		
		\begin{scope}[xshift=8cm,yshift=4cm]
			\begin{scope}[scale=\efghschaal,yshift=\efghshift]
				\node[vertex] (v9) at (0,0) {};
				\node[vertex] (v7) at (10:8mm) {};
				\node[vertex] (v2) at (50:8mm) {};
				\node[vertex] (v8) at (130:8mm) {};
				\node[vertex] (v1) at (170:8mm) {};
				\node[vertex] (v6) at (250:8mm) {};
				\node[vertex] (v0) at (290:8mm) {};
				\begin{scope}[shift=(v7)]
					\node[vertex] (v5) at (50:8mm) {};
				\end{scope}
				\begin{scope}[shift=(v8)]
					\node[vertex] (v4) at (170:8mm) {};
				\end{scope}
				\begin{scope}[shift=(v6)]
					\node[vertex] (v3) at (290:8mm) {};
				\end{scope}
				\draw (v2) to[bend right=15] node[edge_chip] {} (v8);
				\draw (v1) to[bend right=15] node[edge_chip] {} (v6);
				\draw (v0) to[bend right=15] node[edge_chip] {} (v7);
				\draw (v3) -- (v0) -- (v9) -- (v1) -- (v8);
				\draw (v2) -- (v9) -- (v3) -- (v6) -- (v0);
				\draw (v7) -- (v9) -- (v4) -- (v8) -- (v9) -- (v5) -- (v2) -- (v7) -- (v5);
				\draw (v4) -- (v1);
				\draw (v6) -- (v9);
			\end{scope}
			\draw (v9) ++(-30:0.13cm) node[vertex_chip] {};
			\draw (v9) ++(-150:0.13cm) node[vertex_chip] {};
			\draw (v9) ++(90:0.13cm) node[vertex_chip] {};
			\node[anchor=base west] at (\mylabelx,\mylabely) {(g)};
		\end{scope}
		
		\begin{scope}[xshift=12cm,yshift=4cm]
			\begin{scope}[scale=\efghschaal,yshift=\efghshift]
				\node[vertex] (v9) at (0,0) {};
				\node[vertex] (v3) at (10:8mm) {};
				\node[vertex] (v6) at (50:8mm) {};
				\node[vertex] (v2) at (130:8mm) {};
				\node[vertex] (v5) at (170:8mm) {};
				\node[vertex] (v1) at (250:8mm) {};
				\node[vertex] (v7) at (290:8mm) {};
				\begin{scope}[shift=(v3)]
					\node[vertex] (v0) at (50:8mm) {};
				\end{scope}
				\begin{scope}[shift=(v2)]
					\node[vertex] (v8) at (170:8mm) {};
				\end{scope}
				\begin{scope}[shift=(v1)]
					\node[vertex] (v4) at (290:8mm) {};
				\end{scope}
				\draw (v8) to[bend left=15] node[edge_chip] {} (v0);
				\draw (v7) to[bend right=20] node[edge_chip] {} (v0);
				\draw (v8) to[bend right=20] node[edge_chip] {} (v1);
				\draw (v0) -- (v3) -- (v9) -- (v0) -- (v6) -- (v9) -- (v1);
				\draw (v7) -- (v1) -- (v4) -- (v9) -- (v2) -- (v8) -- (v5) -- (v9) -- (v7) -- (v4);
				\draw (v2) -- (v5);
				\draw (v3) -- (v6);
				\draw (v8) -- (v9);
			\end{scope}
			\draw (v9) ++(-30:0.13cm) node[vertex_chip] {};
			\draw (v9) ++(-150:0.13cm) node[vertex_chip] {};
			\draw (v9) ++(90:0.13cm) node[vertex_chip] {};
			\node[anchor=base west] at (\mylabelx,\mylabely) {(h)};
		\end{scope}
	\end{tikzpicture}
	\caption{Additional counterexamples to \protect\myautoref{conj:subdivision-metric}{itm:conj:subdivision} for $k = 2$ and $r = 1$. The small blue hexagons represent the chips of an optimal divisor on the $2$-regular subdivision.}
	\label{fig:additional-counterexamples}
\end{figure}
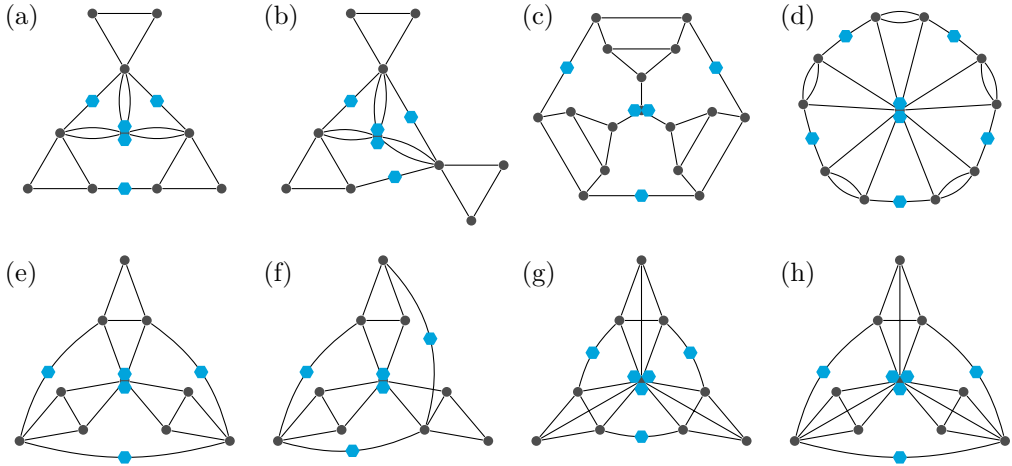

We have tested \myautoref{conj:subdivision-metric}{itm:conj:subdivision} for $k = 2$ and $r = 1$ for all simple connected graphs on at most $10$ vertices.
These graphs were generated using the program \texttt{geng} from the \texttt{gtools} suite packaged with \texttt{nauty} \cite{nauty-paper, nauty}, and tested using custom code that we wrote to compute the divisorial gonality of a graph \cite{gonality-code}.
We have found that every simple connected graph with $9$ or fewer vertices satisfies $\dgon(\sigma_2(G)) = \dgon(G)$, and that there are exactly $29$ counterexamples with $10$ vertices (and no parallel edges), including the minimal simple tricycle $\Tms$ and the graphs depicted in \hyperref[fig:additional-counterexamples]{\autoref*{fig:additional-counterexamples}(e)--(h)}.
For a list of all $29$ minimal simple counterexamples and code to reproduce this list, see \cite{gonality-code}.
There we have also included optimized code to check whether the divisorial gonality of a given graph satisfies the Brill--Noether bound, which we have used to verify \autoref{conj:Brill-Noether} for all simple connected graphs with at most $13$ vertices.
No counterexamples were found.

We close with a few open problems.

\begin{enumerate}[label=\arabic*.]
	\item As mentioned before, the Brill--Noether conjecture \cite[Conj.~3.9(1)]{Baker-specialization} remains open.
	
	\item What is the smallest constant $c$ such that $\dgon(G) \leq c \dgon(\Gamma(G))$ for all graphs $G$?
	Our examples from \autoref{thm:counterexample} show that $c \geq \frac{6}{5}$, and \autoref{prop:bound} shows that $c \leq 2$.
	
	\item All counterexamples presented in this paper satisfy $\dgon(\sigma_2(G)) < \dgon(G)$.
	Note that this implies that $\dgon(\sigma_k(G)) < \dgon(G)$ for every even number $k$.
	Is there a graph $G$ such that $\dgon(\sigma_k(G)) < \dgon(G)$ for some odd number $k$?
	Is there a graph $G$ such that $\dgon(\sigma_2(G)) = \dgon(G)$ but $\dgon(\sigma_k(G)) < \dgon(G)$ for some $k > 2$?
	
	\item Is there a graph $G$ such that $\dgon(\Gamma(G)) = \dgon(G)$, but $\dgonr{\Gamma(G)}{r} < \dgonr{G}{r}$ for some $r \geq 2$?
\end{enumerate}

\small
\paragraph{Acknowledgements}
We are grateful to Dion Gijswijt, Sophie Huiberts and Alejandro Vargas for helpful discussions.
The first author is partially supported by the Dutch Research Council (NWO), project number 613.009.127.
The second author would like to thank the Max Planck Institute for Mathematics Bonn for its financial support.

\bibliographystyle{alpha}
\bibliography{bronnen}

\end{document}